    \newcommand{\cb}{{\mathcal B}}
\newcommand{\Ee}{\mathbb{E}}
\newcommand{\vf}{{\varphi}}
\newcommand{\ds}{\displaystyle}
\newtheorem{thm}{Theorem}[section]
\newtheorem{lem}[thm]{Lemma}
\newtheorem{prop}[thm]{Proposition}
\newtheorem{exam}[thm]{Example}
\newtheorem*{remark}{Remark} 
\numberwithin{equation}{section}
\begin{document}

\noindent
{\Large\bf Measure-valued branching processes associated with\\ 
Neumann nonlinear semiflows}\\[2mm]

\noindent 
{\bf  Viorel Barbu}\footnote{Al. I. Cuza University  and Octav Mayer Institute of Mathematics (Romanian Academy), 
Blvd. Carol I, No. 8, 700506 Ia\c si, Romania. E-mail: vbarbu41@gmail.com}, 
{\bf Lucian Beznea}\footnote{Simion Stoilow Institute of Mathematics  of the Romanian Academy, 
P.O. Box \mbox{1-764,} RO-014700 Bucharest, Romania, University of Bucharest, Faculty of Mathematics and Computer Science, 
and Centre Francophone en Math\'ematique de Bucarest. 
E-mail: lucian.beznea@imar.ro}


\vspace{7mm}

\noindent {\small{\bf Abstract.}  
We construct a measure-valued branching Markov process associated with a nonlinear boundary value problem, 
where the boundary condition has a nonlinear pseudo monotone branching mechanism term $-\beta$, 
which includes as a limit  case $\beta(u) = - u^{m}$, with $0 < m < 1$. 
The process is then used in the probabilistic representation of the solution of  the parabolic problem
associated with a nonlinear Neumann boundary value problem.
In this way the classical association of the superprocesses to the Dirichlet boundary value problems 
also holds for the nonlinear Neumann boundary value problems.
It turns out that the obtained branching process behaves 
on the measures carried by the given open set like the linear continuous semiflow, 
induced by the reflected Brownian motion, while the branching occurs 
on the measures having non-zero traces on the boundary of the open set, 
with the behavior  of the $(-\beta)$-superprocess, having as spatial motion the process 
on the boundary associated to the reflected Brownian motion.
\\[2mm]

\noindent
{\bf Mathematics Subject Classification (2010):}
60J80, 
35J25, 	
60J45, 
60J35, 
47D07, 
60J50, 
31B20.  
}\\

\noindent {\bf Key words:} Neumann nonlinear semiflow, branching process, nonlinear semigroup, 
negative definite function, reflected Brownian motion, boundary process.

\section{Introduction}

\vspace{0.3cm}

Let $\mathcal{O}$ be a bounded, open subset of $\mathbb{R}^d$, $d \geqslant 1$, 
with smooth boundary $\Gamma$ (for instance, of class $C^2$). 
Consider the nonlinear parabolic problem
$$
\left\{ \begin{array}{ll}
\dfrac{\partial u}{\partial t} - \frac{1}{2} \Delta u + \alpha u = 0 & {\rm in} \;  (0, \infty) \times \mathcal{O},\\[4mm]
\dfrac{\partial u}{\partial \nu} + \beta(u) = g & {\rm on} \; \Gamma,\\[4mm]
u(0, \cdot) = f & {\rm in} \; \mathcal{O},
\end{array}\right. \leqno(1.1)
$$
where $\frac{\partial}{\partial \nu}$ is the outward normal derivative to the boundary 
$\Gamma$ of  $\mathcal{O}$, 
$g$ is a positive continuously differentiable function on $\Gamma$, 
$f \in C(\overline{\mathcal{O}})$, $\alpha \in \mathbb{R}^{\ast}_+$, 
and $\beta : \mathbb{R} \longrightarrow \mathbb{R}_-$ is the following continuous mapping
$$
\beta(u) = \left\{ \begin{array}{ll}
\ds\int_{0}^{\infty}(e^{-su} - 1)\eta(ds) - bu, & {\rm if} \; u \geqslant 0,\\[5mm]
0, & {\rm if} \; u < 0,
\end{array}\right. \leqno(1.2)
$$
with $\eta$ a positive measure on $\mathbb{R}_+$ such that 
$\int_{\mathbb{R}_+}\!\! s\, \eta(ds) < \infty$ and $b \in \mathbb{R}_+$.

We assume that
$$
\mathop{\int}\limits_{\mathbb{R}_+}\!\!  s\, \eta(ds) + 
b \leqslant \gamma 
:= \mathop{\inf}\limits_{v \in H^1(\mathcal{O})} \dfrac{\frac{1}{2} \| \nabla v \|^2_{L^2(\mathcal{O})} 
+ \alpha\| v \|^2_{L^2(\mathcal{O})}}{\| v \|^2_{L^2(\Gamma)}}. \leqno(1.3)
$$
Note that the last inequality is equivalent with 
the property of the function $u\longmapsto \beta(u)+\gamma u$ to be nondecreasing.
Moreover in this case $\beta$ is a Lipschitz function.

The function $-\beta$ is called {\it branching mechanism}.
An example of $\beta$ satisfying $(1.3)$ is 
$$
\beta_N(u)= \dfrac{m}{\Gamma(1 - m)}\mathop{\int}\limits_{0}^{\ \ \ N} \dfrac{e^{-su}-1}{s^{m + 1}} ds.
$$
with $N>0$ and $0< m < 1$.
A limit case is therefore $ au^{m}$ for  a convenient number $a < 0$, since
$$
- u^{m} = \dfrac{m}{\Gamma(1 - m)}\mathop{\int}\limits_{0}^{\infty} \dfrac{e^{-su}-1}{s^{m + 1}} ds
= \lim_{N\to \infty} \beta_N(u).
$$

If $\beta = \alpha = 0$ and $g = 0$ then the solution of the linear problem $(1.1)$ 
is given by the transition function of the reflected Brownian motion 
$B = (B_t)_{t \geqslant 0}$ on $\mathcal{O}$: $u(t, \cdot) = \mathbb{E}^{\cdot}(f(B_t))$, $t \geqslant 0$, 
where $f$ is a bounded, real-valued Borel measurable function on $\mathcal{O}$.

The first aim of this paper is to show that the solution of $(1.1)$ 
admits a probabilistic interpretation if $\beta$ is given by  $(1.2)$, 
which is similar with what happens in the linear case $(\beta = 0)$.
More precisely, there exists a branching Markov process $X = (X_t)_{t \geqslant 0}$ with state space 
the set $M(\overline{\mathcal{O}})$ of all positive finite measures on $\overline{\mathcal{O}}$, such that the solution of $(1.1)$ is
$$
u(t, x) = -\ln \mathbb{E}^{\delta_x}(e_f(X_t)), \; t \geqslant 0, \; x \in \overline{\mathcal{O}}, \leqno(1.4)
$$ 
where for  a Borel, positive, real-valued function $f$ on $\overline{\mathcal{O}}$ 
we considered the {\it exponential mapping \;} $e_f : M(\overline{\mathcal{O}}) \longrightarrow [0, 1]$, defined as
$$
e_f(\mu) := e^{-\int f d\mu} \quad {\rm for \; all} \; \mu \in M(\overline{\mathcal{O}}).
$$ 

The first step of our approach is to prove the existence of the solution of $(1.1)$. 
We consider the maximal monotone operator $\mathcal{A}$ associated to  $(1.1)$ 
and we show that it is the infinitesimal generator of a nonlinear semigroup of contractions 
$(V_t)_{t \geqslant 0}$ on $L^2({\mathcal{O}})$, such that 
$u(t, \cdot) := V_tf$ is a solution of  $(1.1)$ 
for each $f$ from the domain of $\mathcal{A}$. 
If $ 1 \leqslant d \leqslant 3$ then $(V_t)_{t \geqslant 0}$ induces a $C_0$-semigroup of (nonlinear) contractions on $C(\overline{\mathcal{O}})$.

The second step is to prove that the map $f \longmapsto V_tf(x)$, $x \in \overline{\mathcal{O}}$, 
is negative definite on $C_+(\overline{\mathcal{O}})$ 
(:= the set of all positive continuous functions on $\overline{\mathcal{O}}$). 
We use essentially an approximating process in solving  $(1.1)$ and a negative definiteness  property of the mapping $-\beta$.

The last step is to follow the so called semigroup approach in order to construct the claimed measure-valued branching process; 
see \cite{Wat68}, \cite{Fi88}, \cite{Dy02},  \cite{LeGa99}, \cite{Li11}, and \cite{Be11}.

We can describe the infinitesimal generator of the branching process $X$, 
which shows that it behaves as the linear semiflow $t \longmapsto \mu \circ P_t$, $t \geqslant 0$, $\mu \in M(\mathcal{O})$ 
(:= the set of all positive finite measures on $\mathcal{O}$), 
where $(P_t)_{t \geqslant 0}$ is the transition function of the reflected Brownian motion. 
The branching property of $X$ holds on the measures with non-zero traces on the boundary $\Gamma$ of $\mathcal{O}$;
for more details see the final remark of this paper.

Formula $(1.4)$  suggests that we can compare the measure-valued process $X$ with the 
$(B, \beta_0)$-superprocess (in the sense of Dynkin; see \cite{Dy02}), where
$$
\beta_0(x, u) = \left\{ \begin{array}{ll}
- \beta(u) + g(x), & {\rm if} \; x \in \Gamma, \; u \geqslant 0,\\[3mm]
0, & {\rm if} \; x \in \mathcal{O}, \; u < 0.
\end{array}\right.
$$

We may conclude that the classical association of the superprocesses  to the Dirichlet boundary value problems also holds 
for the nonlinear Neumann boundary value problems. 
However, due to the boundary flux induced by $g$, 
this branching process is no longer conservative if $g \not\equiv 0$.

The second aim of this paper is to prove that the solution of the nonlinear parabolic problem with the dynamic flux on the boundary
$$
\left\{ \begin{array}{ll}
 \frac{1}{2} \Delta u - \alpha u = 0 & {\rm in} \; \mathcal{O},\\[3mm]
\dfrac{\partial v}{\partial t} + \dfrac{\partial u}{\partial \nu} + \beta(v) = 0 & {\rm on} \; \Gamma,\\[3mm]
v = u|_{\Gamma}, & u \in C(\overline{\mathcal{O}}),
\end{array}\right.
$$
also admits a probabilistic interpretation, which is related to the branching process associate with the equation $(1.1)$ with $g\equiv 0$. 
It turns out that the associated measure-valued branching process is precisely 
the $(Z, \alpha- \beta)$-superprocess on $M(\Gamma)$
(:= the set of all positive finite measures on $\Gamma$), 
where $Z = (Z_t)_{t \geqslant 0}$ is the boundary process  (on $\Gamma$), 
induced by the reflected Brownian motion, or equivalently, by the classical (linear) Neumann boundary value problem.
Comparing the infinitesimal operators, one can see that the measure-valued branching process $X$ behaves on $M(\Gamma)$ as this
superprocess; cf. Final Remark.
So, for the process $X$ on $M(\overline{\mathcal{O}})$, the role of the "boundary process" is played by the
$(Z, \alpha- \beta)$-superprocess  on $M(\Gamma)$.

Finally, we thank  the anonymous referee for carefully reading the manuscript
and for his valuable comments.

\section{\mbox{Nonlinear $C_0$-semigroups generated by the Neumann} problem}  
Everywhere in the following we assume that condition $(1.3)$ holds.

Let $H = L^2(\mathcal{O})$ denote the space of al real-valued square integrable functions on $\mathcal{O}$ with the scalar product
$$
\langle u, v \rangle_2 = \int_{\mathcal{O}} u \, v dx \, \mbox{ for all } u, v \in   L^2(\mathcal{O}), 
$$
and the norm $|u|_2 :=  \langle u, u \rangle_2^{\frac{1}{2}}$. By $H^k(\mathcal{O})$, $k = 1,2$, 
we denote the standard Sobolev space in $L^2(\mathcal{O})$ and $C(\overline{\mathcal{O}})$ 
denotes the space of all continuous functions on $\overline{\mathcal{O}}$, endowed with the supremum norm $||\cdot||_{C(\overline{\mathcal{O}})}$. 
Let further $\sigma$ denote  the surface measure on $\Gamma$.

Define the nonlinear operator $\mathcal{A} : D(\mathcal{A}) \subseteq  H \longrightarrow H$,
$$
\left\{ \begin{array}{ll}
D(\mathcal{A}) := \{ u \in H^2(\mathcal{O}) : \dfrac{\partial u}{\partial \nu} + \beta(u) = g  \mbox{  on} \; \Gamma\},\\[3mm]
\mathcal{A} u := - \frac{1}{2} \Delta u+ \alpha u \quad   \mbox{for all} \; u \in D(\mathcal{A}).
\end{array}\right. \leqno(2.1)
$$

Note that $D(\mathcal{A})$ is dense in $H$.
We have for all $u, v \in D(\mathcal{A})$
$$
\langle \mathcal{A} u - \mathcal{A} v, u - v \rangle_2 = 
\int_{\mathcal{O}}(\frac{1}{2}  |\nabla(u - v)|^2 + \alpha(u - v)^2) dx + \frac{1}{2} \int_{\Gamma}(\beta(u) - \beta(v))(u - v) d\sigma 
$$
$$
\geqslant 
\int_{\mathcal{O}}(\frac{1}{2} |\nabla(u - v)|^2 + \alpha(u - v)^2) dx- \frac{\gamma}{2}  \int_{\Gamma}|u - v|^2 d\sigma \geqslant 0,
$$
because
$$
\int_{\mathcal{O}}  (\frac{1}{2} |\nabla v|^2 + \alpha v^2) dx \geqslant \gamma \int_\Gamma v^2 d\sigma \quad \mbox{ for all } v\in H^1,  \leqno{(2.2)}
$$
where the first inequality holds by the monotonicity of the map $r \longmapsto \beta(r) + \gamma  r$ 
and the second one is a consequence of condition $(1.3)$. 
Hence $\mathcal{A}$ is monotone in $H \times H$.

It should be said also that $\mathcal{A}$ is a {\it potential operator}, $\mathcal{A} = \partial \Phi$, 
where  $\Phi : L^2(\mathcal{O}) \longrightarrow (-\infty, +\infty]$ is the lower semicontinuous, convex function   defined as
$$
\Phi(u) := \left\{ \begin{array}{ll}
\dfrac{1}{4} \ds\int_{\mathcal{O}} |\nabla u|^2 dx + \frac{1}{2}  \int_{\Gamma} j(u) d\sigma + \dfrac{\alpha}{2} \int_{\mathcal{O}} u^2 dx & {\rm if} \; u \in H^1(\mathcal{O}),\\[6mm]
+ \infty & {\rm elsewhere},
\end{array}\right.
$$
with $j(r) := \int_{0}^{r} \beta(s) ds$, $r \in \mathbb{R}$. 
Moreover, by a fundamental result due to H. Br\'ezis \cite{Brez72}, 
it follows that $R(I + \lambda \mathcal{A}) = H$ for all $\lambda > 0$ (see also \cite{Bar10}, p. 99). 
In fact,  the Br\'ezis regularity result is given for the elliptic problem 
$u - \lambda \Delta u = f$ in $\mathcal{O}$, $\dfrac{\partial u}{\partial v} + \widetilde{\beta}(u) = 0$ on $\Gamma$, where $\widetilde{\beta}$ is monotone.
The extension to the present case (that is $r \longmapsto \beta(r) + \gamma r$ is monotone) follows however by the same argument.

By standard existence theory of the Cauchy problem in Banach spaces (see e.g. \cite{Bar10}, p. 163) 
it follows that $\mathcal{A}$ is the infinitesimal generator of a continuous semigroup of contractions $(V_t)_{t \geqslant 0}$ in $H$,
that is 
$$
|V_t f - V_t \bar f|_2 \leqslant |f - \bar f|_2  \; \mbox{ for all } \; t \geqslant 0, f, \bar f \in H.
$$
Moreover, it turns out that $u(t) := V_tf$, $t \geqslant 0$, is the unique solution of $(1.1)$ if $f \in D(\mathcal{A})$ and 
$$
\left\{ \begin{array}{l}
\dfrac{d^+}{dt} V_tf + \mathcal{A}V_tf = 0 \quad {\rm for \; all} \; t \geqslant 0,\\[3mm]
V_0f = f
\end{array}\right. \leqno(2.3)
$$

\noindent (cf. Theorem 4.12 from \cite{Bar10}).

The next two propositions collect other properties of $(V_t)_{t \geqslant 0}$.

\vspace{0.3cm}

\begin{prop} 
Let $f \in H$ and define the mapping $u : [0, \infty) \longrightarrow H$ as $u(t) := V_tf$, $t \geqslant 0$. 
Then the following assertions hold

$(i)$ If $f \in D(\mathcal{A})$ and $T > 0$ then $u(t) \in D(\mathcal{A})$  for all  $t \geqslant 0$ and 
$$
u \in  C([0, T]; H) \cap L^{\infty}(0, T; H^2(\mathcal{O})), \; \dfrac{\partial u}{\partial t} \in L^{\infty}(0, T; H^2(\mathcal{O})), 
$$
$$
|\mathcal{A}u(t)|_2 \leqslant |\mathcal{A}f|_2 \quad for \; all \; t > 0. 
$$

$(ii)$ The function $t \longmapsto u(t)$ is right continuous from $(0, \infty)$ to $H^2(\mathcal{O})$ and so, if $1 \leqslant d \leqslant 3$, 
then since $H^2(\mathcal{O}) \hookrightarrow C(\overline{\mathcal{O}})$, we have
$$
\mathop{\lim}\limits_{s \searrow t} \| u(s) - u(t) \|_{C(\overline{\mathcal{O}})} = 0 \quad for \; all \; t \geqslant 0.
$$

$(iii)$  If $f \in H$ then $u \in C([0, \infty); H)$ and $\dfrac{du}{dt}$, 
$\mathcal{A}u \in L^\infty(\delta, T;  H)$  for all  $\delta \in (0, T)$. 
If $c \in \mathbb{R_+}$ and $c \leqslant f$ then  $e^{-\alpha t}c \leqslant V_t f$. 
In particular, if $f \geqslant 0$ then $V_tf \geqslant 0$ for all $t \geqslant 0$.
Moreover, if  $f_1, f_2 \in H$, $f_1 \leqslant f_2$ a.e. in $\mathcal{O}$, 
then $V_t f_1 \leqslant V_t f_2$  a.e. in $\mathcal{O}$.
\end{prop}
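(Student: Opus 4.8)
The plan is to establish Proposition~2.4 piece by piece, using the general theory of nonlinear semigroups generated by maximal monotone (subdifferential) operators, which applies since $\mathcal A = \partial\Phi$ and $R(I + \lambda\mathcal A) = H$ for all $\lambda > 0$.

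For part $(i)$, I would invoke the regularity theory for the Cauchy problem $u' + \mathcal A u = 0$, $u(0) = f$ with $f \in D(\mathcal A)$: by Theorem~4.12 of \cite{Bar10}, $u(t) \in D(\mathcal A)$ for all $t \geqslant 0$, $u$ is Lipschitz from $[0,T]$ to $H$, the right derivative $\frac{d^+}{dt}u(t)$ exists for every $t$ and equals $-\mathcal A u(t)$, and the map $t \mapsto \mathcal A u(t)$ is right continuous with $|\mathcal A u(t)|_2$ nonincreasing; this gives at once the bound $|\mathcal A u(t)|_2 \leqslant |\mathcal A f|_2$. From $u(t) \in D(\mathcal A) \subseteq H^2(\mathcal O)$ together with $|{-\tfrac12}\Delta u(t) + \alpha u(t)|_2 \leqslant |\mathcal A f|_2$ and the boundary condition, elliptic regularity (the Neumann problem estimate $\|w\|_{H^2} \leqslant C(|\Delta w|_2 + |w|_2 + \text{boundary terms})$, using that $\beta$ is Lipschitz hence $\beta(u)\in H^1(\Gamma)$ is controlled) yields a uniform $H^2$ bound, so $u \in L^\infty(0,T; H^2(\mathcal O))$; applying the same elliptic estimate to $\partial u/\partial t = -\mathcal A u$ (which also lies in $H$ with a uniform bound) gives $\partial u/\partial t \in L^\infty(0,T; H^2(\mathcal O))$. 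Continuity $u \in C([0,T];H)$ is immediate from the contraction property.

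For part $(ii)$: right continuity of $t \mapsto u(t)$ into $H^2(\mathcal O)$ follows by combining right continuity of $t \mapsto \mathcal A u(t)$ into $H$ (from the semigroup theory) with the elliptic regularity estimate, which turns $H$-convergence of $\mathcal A u(s) \to \mathcal A u(t)$ plus $H$-convergence of $u(s)\to u(t)$ into $H^2$-convergence (here one again uses that $\beta$ is Lipschitz so the boundary term $\beta(u(s)) - \beta(u(t))$ is controlled in the relevant trace norm by $|u(s)-u(t)|$). Then, for $1 \leqslant d \leqslant 3$, the Sobolev embedding $H^2(\mathcal O) \hookrightarrow C(\overline{\mathcal O})$ upgrades this to $\|u(s) - u(t)\|_{C(\overline{\mathcal O})} \to 0$ as $s \searrow t$.

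For part $(iii)$: when $f \in H$ only, I would use the standard smoothing property of subdifferential semigroups (the Br\'ezis regularization theorem, e.g.\ \cite{Bar10}): $u(t) \in D(\mathcal A)$ for $t > 0$, $t \mapsto u(t)$ is locally Lipschitz on $(0,\infty)$ into $H$ with $|\mathcal A u(t)|_2 \leqslant |\mathcal A u(\delta)|_2 \leqslant \frac{C}{\delta}(\dots)$ for $t \geqslant \delta$, giving $\frac{du}{dt}, \mathcal A u \in L^\infty(\delta,T;H)$; continuity $u \in C([0,\infty);H)$ holds by density of $D(\mathcal A)$ and contractivity. For the order/positivity statements I would argue by comparison: to show $e^{-\alpha t}c \leqslant V_t f$ when $c \leqslant f$, note that $t \mapsto e^{-\alpha t}c$ solves $w' + \alpha w = 0$ with $\frac{\partial w}{\partial\nu} = 0$ and $\beta(c) \leqslant g$ (since $\beta \leqslant 0 \leqslant g$), so it is a subsolution of $(1.1)$, and a maximum-principle / monotonicity argument for $\mathcal A$ (testing $\mathcal A u_1 - \mathcal A u_2$ against $(u_2 - u_1)^+$ and using that $r \mapsto \beta(r)+\gamma r$ is nondecreasing together with $(2.2)$) gives $w(t) \leqslant V_t f$; taking $c = 0$ gives $V_t f \geqslant 0$ for $f \geqslant 0$. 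The monotonicity $f_1 \leqslant f_2 \Rightarrow V_t f_1 \leqslant V_t f_2$ follows by the same comparison technique applied to the two solutions, or equivalently by noting that the resolvents $(I + \lambda\mathcal A)^{-1}$ are order preserving (a consequence of the structure of the Neumann boundary problem and the monotonicity of $\beta + \gamma\,\mathrm{id}$) and passing to the limit in the exponential formula $V_t f = \lim_n (I + \tfrac{t}{n}\mathcal A)^{-n} f$.

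The main obstacle is the order-preserving property in part $(iii)$: establishing that the resolvent $(I+\lambda\mathcal A)^{-1}$ is monotone (equivalently, the comparison principle for the stationary Neumann problem $u - \lambda(\tfrac12\Delta u - \alpha u) = h$, $\tfrac{\partial u}{\partial\nu} + \beta(u) = g$) requires a careful test-function argument: subtract the two equations for $u_1, u_2$ with data $h_1 \leqslant h_2$, test against $(u_1 - u_2)^+ \in H^1(\mathcal O)$, and on the boundary integral exploit $(\beta(u_1) - \beta(u_2))(u_1 - u_2)^+ \geqslant -\gamma|(u_1-u_2)^+|^2$ absorbed by $(2.2)$; this is where condition $(1.3)$ is essential and where one must be careful that the chopped function stays in the correct energy space and that the boundary trace manipulations are justified. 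Everything else reduces to quoting the nonlinear semigroup regularity theory and standard elliptic estimates.
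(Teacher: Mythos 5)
Your proposal is correct and takes essentially the same route as the paper: parts $(i)$ and $(ii)$ are obtained by invoking the regularity theory for semigroups generated by maximal monotone (subdifferential) operators in Hilbert space, and part $(iii)$ by the same energy/test-function comparison argument, namely multiplying by $(e^{\alpha t}u-c)^-$ (respectively $(u_1-u_2)^+$) and absorbing the boundary term via the monotonicity of $r\longmapsto\beta(r)+\gamma r$ together with inequality $(2.2)$.
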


\begin{proof}
Assertions $(i)$  and $(ii)$ are consequences of the existence theory of the Cauchy problem associated 
with nonlinear maximal monotone operators in Hilbert spaces (see e.g. \cite{Bar10}). 
To prove assertion $(iii)$,  assume first that $c \leqslant f$ and rewrite equation $(1.1)$ as
$$
\left\{ \begin{array}{ll}
\dfrac{\partial}{\partial t}(e^{\alpha t}u - c) - \frac{1}{2}  \Delta(e^{\alpha t}u - c) = 0 & {\rm in} \; (0, T) \times \mathcal{O},\\[3mm]
\dfrac{\partial}{\partial \nu}(e^{\alpha t}u - c) + e^{\alpha t}(\beta(u) - g) = 0 & {\rm on} \; (0, T) \times \Gamma,
\end{array}\right.
$$
multiply by $(e^{\alpha t}u - c)^-$ and integrate on $(0, t) \times \mathcal{O}$, for all $t \in (0, T)$ 
to get after some calculation that $(e^{\alpha t}u - c)^- \leqslant 0$ a.e. on $(0, T) \times \mathcal{O}$. 

One proves similarly  that $(V_tf_2-V_tf_1)^{+}=0$ on $(0,T)\times \mathcal{O}$, 
involving as above  $(2.2)$ and the monotonicity of the function $u\longmapsto \beta(u) +\gamma u$.
Indeed, if $u_i := V_t f_i$, $i=1,2$,  and $v=u_1-u_2$, then 
$$
\left\{ \begin{array}{ll}
\dfrac{\partial}{\partial t}v -  \frac{1}{2}  \Delta v +\alpha v  = 0 & {\rm in} \; (0, T) \times \mathcal{O},\\[3mm]
\dfrac{\partial}{\partial \nu}v + \beta(u_1) - \beta(u_2) = 0 & {\rm on} \;  \Gamma.
\end{array}\right.
$$
Multiplying by  $v^+$ and integrating we get for all $t\geqslant 0$
$$
\| v^+(t) \|^2_{L^2} +\frac{1}{2} \int_0^t \int_{\mathcal{O}}|\nabla v^+|^2 ds dx 
+ \alpha\!\!  \int_0^t \int_{\mathcal{O}}|v^+|^2 ds dx 
+ \frac{1}{2}  \int_0^t \int_{\Gamma} (\beta(u_1) - \beta(u_2)) v^+ ds d\sigma =0.
$$
Since $\beta(u_2) - \beta(u_1)\leqslant  \gamma(u_1-u_2)$ on the set $[v^+ >0]$,  we obtain
$$
\| v^+(t) \|^2_{L^2} +\frac{1}{2} \int_0^t \int_{\mathcal{O}}|\nabla v^+|^2 ds dx +
\alpha \!\! \int_0^t \int_{\mathcal{O}}|v^+|^2 ds dx 
\leqslant \frac{\gamma}{2}  \int_0^t \int_{\Gamma} |v^+|^2  ds d\sigma.
$$
By $(2.2)$ we conclude that $ \| v^+(t) \|^2_{L^2}=0$ for all $t\geqslant 0$, as claimed.
\end{proof}

An interesting feature of the family $(V_t)_{t\geqslant 0}$ is that it is a $C_0$-semigroup 
on $C(\overline{\mathcal{O}})$, 
namely,  we have the following result.

\begin{prop}     
Assume that $1 \leqslant d \leqslant 3$. 
Then the family $(V_t)_{t \geqslant 0}$ induces a $C_0$-semigroup of 
$\alpha$-quasi-contractions on $C(\overline{\mathcal{O}})$,
i.e., for each $f, \bar{f}   \in C(\overline{\mathcal{O}})$ we have
$$
V_tf \in C(\overline{\mathcal{O}}) \; {\rm and} \; \| V_tf - V_t\bar{f} \|_{*} 
\leqslant e^{\alpha t} \| f - 
\bar{f} \|_{*} \ \ for \; all \; t \geqslant 0,
$$
$$
V_tV_sf = V_{t + s}f \; for \; all \; t, s \geqslant 0,
$$
$$
\mathop{\lim}\limits_{t \searrow 0}\| V_tf - f \|_{*} = 0.
$$
Here  $\| \cdot  \|_{*}$ is a norm on $C(\overline{\mathcal{O}})$ 
which is equivalent with the supremum norm,
to be defined below.
In addition we have $V_t(S) \subseteq S$ for all $t \geqslant 0$, where $S := C_+(\overline{\mathcal{O}})$.
\end{prop}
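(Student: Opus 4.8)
The plan is to deduce the $C(\overline{\mathcal O})$-theory from the $L^2$-theory of Proposition 2.1 by passing to a regularized problem and a limiting argument. First I would treat the case $f\in D(\mathcal A)$: by Proposition 2.1$(i)$--$(ii)$, $u(t)=V_t f$ lies in $H^2(\mathcal O)$ for every $t\ge 0$ and is right continuous into $H^2(\mathcal O)$; since $1\le d\le 3$ the Sobolev embedding $H^2(\mathcal O)\hookrightarrow C(\overline{\mathcal O})$ is available, so $V_t f\in C(\overline{\mathcal O})$ and $\lim_{s\searrow t}\|V_s f-V_t f\|_{C(\overline{\mathcal O})}=0$; in particular $\lim_{t\searrow 0}\|V_t f-f\|_{C(\overline{\mathcal O})}=0$ on this dense subset. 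The semigroup identity $V_tV_s f=V_{t+s}f$ is already known in $H$, hence holds pointwise for the continuous representatives. The substantive point is the quasi-contraction estimate; once it is proved on $D(\mathcal A)$ it extends to all of $C(\overline{\mathcal O})$ by density (note $D(\mathcal A)$ is dense in $C(\overline{\mathcal O})$ for $d\le 3$, being dense in $H^2(\mathcal O)$ which embeds densely in $C(\overline{\mathcal O})$), and the three displayed properties follow for general $f$ by uniform approximation, with $V_t f\in C(\overline{\mathcal O})$ as a uniform limit.

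For the quasi-contraction, I would compare two solutions $u_i=V_t f_i$ via a maximum-principle argument in the spirit of the proof of Proposition 2.1$(iii)$, but carried out in the sup-norm rather than $L^2$. Set $w=e^{-\alpha t}(u_1-u_2)$; then $\partial_t w-\tfrac12\Delta w=0$ in $\mathcal O$ and $\partial_\nu w+e^{-\alpha t}(\beta(u_1)-\beta(u_2))=0$ on $\Gamma$. Because $r\mapsto\beta(r)+\gamma r$ is nondecreasing, $\beta(u_1)-\beta(u_2)$ has the same sign as $u_1-u_2$ up to the controlled correction $-\gamma(u_1-u_2)$; the term $-\gamma(u_1-u_2)$ on the boundary is exactly compensated by inequality $(2.2)$, which expresses that the Neumann boundary form dominates $\gamma$ times the $L^2(\Gamma)$-norm — this is the mechanism that forces the boundary contribution to be nonpositive. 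Running Stampacchia's truncation (testing with $(w-k)^+$ and using $(2.2)$ to absorb the boundary term) yields a comparison principle, from which $\sup_{\overline{\mathcal O}}(u_1-u_2)^+\le e^{\alpha t}\sup_{\overline{\mathcal O}}(f_1-f_2)^+$; applying this with the roles of $f_1,f_2$ reversed and combining gives $\|V_t f-V_t\bar f\|_*\le e^{\alpha t}\|f-\bar f\|_*$, where $\|\cdot\|_*$ is the norm adapted to this truncation scheme (for instance built from the test functions $(\cdot-k)^+$, or an equivalent weighted sup-norm making the elliptic operator dissipative). The positivity $V_t(C_+(\overline{\mathcal O}))\subseteq C_+(\overline{\mathcal O})$ is immediate from the last sentence of Proposition 2.1$(iii)$ together with $V_t f\in C(\overline{\mathcal O})$.

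I expect the main obstacle to be making the sup-norm comparison rigorous, since the natural estimates from the nonlinear semigroup theory live in $L^2(\mathcal O)$, not in $C(\overline{\mathcal O})$, and the boundary nonlinearity $\beta$ is only Lipschitz. The cleanest route is probably to work with the Yosida-type or resolvent approximations $u_i^\lambda=(I+\lambda\mathcal A)^{-1}$-iterates (equivalently the implicit Euler scheme), which are genuine $H^2$-solutions of the regularized elliptic problems; for these one can legitimately test with $(e^{-\alpha t}(u_1^\lambda-u_2^\lambda)-k)^+$, invoke $(2.2)$ to kill the bad boundary term, derive the discrete quasi-contraction in sup-norm, and then pass $\lambda\downarrow 0$ using the convergence $u_i^\lambda\to u_i$ that already holds in $H$ and is upgraded to $C(\overline{\mathcal O})$ by the $H^2$-bound $|\mathcal A u^\lambda|_2\le|\mathcal A f|_2$ of Proposition 2.1$(i)$ and the embedding. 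Defining $\|\cdot\|_*$ precisely so that the algebra closes — i.e. so that the $\alpha u$ term and the boundary term are simultaneously handled — is the delicate bookkeeping; everything else is a routine transcription of the $L^2$ argument already displayed in the proof of Proposition 2.1.
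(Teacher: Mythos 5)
Your overall architecture is the right one and coincides with the paper's: prove a (quasi-)contraction estimate for the resolvents $(I+\lambda\mathcal A)^{-1}$ in a sup-type norm, generate the semigroup on $C(\overline{\mathcal O})$ by the exponential formula (Crandall--Liggett), and identify it with $(V_t)$ via the common resolvents and the $L^2$ convergence; continuity of $V_tf$ via $H^2(\mathcal O)\hookrightarrow C(\overline{\mathcal O})$ and positivity via Proposition 2.1$(iii)$ are exactly as in the paper. But there is a genuine gap at the one point you yourself flag as ``delicate bookkeeping'': the mechanism you propose for killing the boundary term --- inequality $(2.2)$ --- does not work in a sup-norm truncation argument, and the actual definition of $\|\cdot\|_*$, which the statement explicitly promises, is never produced. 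Inequality $(2.2)$ bounds $\gamma\int_\Gamma v^2\,d\sigma$ by the $H^1(\mathcal O)$ energy of $v$ and is effective in the $L^2$ proof of Proposition 2.1$(iii)$ only because there the test function is $v^+$ itself, so the boundary term is exactly $-\gamma\int_\Gamma (v^+)^2$. Once you truncate at a level $k>0$ and test with $(v-k)^+$, the monotonicity of $r\mapsto\beta(r)+\gamma r$ gives a boundary contribution bounded below by $-\gamma\int_\Gamma v\,(v-k)^+ = -\gamma\int_\Gamma\bigl((v-k)^+\bigr)^2-\gamma k\int_\Gamma (v-k)^+$, and the second term is not absorbed by $(2.2)$ applied to $(v-k)^+$. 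So the Stampacchia scheme you sketch does not close, either for the parabolic problem or for the implicit Euler iterates.

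The paper's proof supplies precisely the missing device: a weight $\varphi\in C^2(\overline{\mathcal O})$, $\varphi\geqslant\rho>0$, with $2\alpha-\varphi\Delta(1/\varphi)\geqslant 0$ in $\overline{\mathcal O}$ and $\frac{1}{\varphi}\frac{\partial\varphi}{\partial\nu}\geqslant \mathrm{Lip}(\beta)$ on $\Gamma$ (constructed from an auxiliary linear Neumann problem $\Delta\psi=K$, $\partial_\nu\psi=L/\delta$ with the compatibility condition $Km(\mathcal O)=\frac{L}{\delta}\sigma(\Gamma)$), and the norm $\|z\|_*:=\sup_{\overline{\mathcal O}}|z\varphi|$. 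Writing $y=u\varphi$ and testing the resolvent equation for $y-\bar y-c$, $c=\|v-\bar v\|_*$, with $(y-\bar y-c)^+$, the boundary term is nonpositive \emph{pointwise} on $\Gamma$, because the Lipschitz bound $\beta(y/\varphi)-\beta(\bar y/\varphi)\leqslant L(y-\bar y)/\varphi$ is dominated by $\frac{1}{\varphi}\partial_\nu\varphi\,(y-\bar y)$ wherever $y-\bar y>c\geqslant 0$; no use of $(2.2)$ is made at this stage. This pointwise domination by the normal growth of the weight is the crux of the proposition, and it is absent from your proposal. A secondary inaccuracy: $D(\mathcal A)$ is not dense in $H^2(\mathcal O)$ (it is the closed level set of the continuous map $u\mapsto \partial_\nu u+\beta(u)-g$ into $L^2(\Gamma)$), so your density argument for extending from $D(\mathcal A)$ to $C(\overline{\mathcal O})$ needs to be replaced; the paper sidesteps this by running Crandall--Liggett directly for every $f\in C(\overline{\mathcal O})$.
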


\begin{proof} 
Since $1 \leqslant d \leqslant 3$ we have $H^2(\mathcal{O}) \subseteq C(\overline{\mathcal{O}})$ and so, 
by Proposition 2.1 $V_tf \in C(\overline{\mathcal{O}})$ for all $f \in H$ and $t \geqslant 0$. 

We show now that  the operator $\mathcal{A}_0$ 
defined as 
$$
\mathcal{A}_0 u := - \frac{1}{2}\Delta u + \alpha u\  \mbox{ for } 
u \in D(\mathcal{A}_0) := \{ u \in H^2(\mathcal{O}) : \Delta u \in C(\overline{\mathcal{O}}), \dfrac{\partial u}{\partial \nu} + \beta(u) 
= g \; {\rm on} \; \Gamma \}
$$ 
is quasi-$m$-accretive in $C(\overline{\mathcal{O}})$, i.e.,  for some $\lambda_0 > 0$ and  any $0< \lambda < \lambda_0$ we have
$$
\| ((1+\lambda\alpha)I + \lambda \mathcal{A}_0)^{-1}v - ((1+\lambda\alpha)I + \lambda \mathcal{A}_0)^{-1}\bar{v} \|_{*} 
\leqslant \| v - 
\bar{v} \|_{*} \; {\rm for \; all} \; v, \bar{v} \in C(\overline{\mathcal{O}}).
$$
Indeed, if $u := ((1+\lambda\alpha)I + \lambda \mathcal{A}_0)^{-1}v$ 
and  
$\bar{u} := ((1+\lambda\alpha)I + \lambda \mathcal{A}_0)^{-1}\bar{v}$, 
then
$$
(1+\lambda\alpha)(u - \bar{u}) - \frac{\lambda}{2}  \Delta(u - \bar{u}) + \lambda \alpha(u - \bar{u}) = v - \bar{v} \; {\rm in} \; \mathcal{O} \; {\rm and} 
\; \dfrac{\partial u}{\partial \nu}(u - \bar{u}) + \beta(u) - \beta(\bar{u}) = 0 \; {\rm on} \; \Gamma.
$$
We choose now $\vf\in C^2(\overline{\mathcal{O}})$ such that
$$
 \vf \geqslant \rho>0\  \mbox{ and }  \
 2\alpha - \vf \Delta(\frac{1}{\vf}) \geqslant 0 \mbox{ in }  \overline{\mathcal{O}}, 
\  \ \frac{1}{\vf}\, \frac{\partial\vf}{\partial\nu}\geqslant L  \mbox{ on } \Gamma,\mbox{ with }  L:= \mbox{\rm Lip}(\beta).  \leqno{(2.4)}
$$
An example of such a function $\vf$ is
$\vf(x):= exp(\delta\psi(x))$, 
where $\delta> 0$ is sufficiently small and $\Delta\psi=K$ in $\mathcal{O}$, 
$ \frac{\partial\psi}{\partial\nu}=\frac{L}{\delta}$ on $\Gamma$; 
$K,$ $\delta$ are such that $Km(\mathcal{O})=\frac{L}{\delta}\sigma(\Gamma)$,
where $m$ is the Lebesgue measure.

We set 
$$
\| z  \|_{*}:= \sup \{ | z(x)\vf({x}) | : x\in \overline{\mathcal{O}}\}
\mbox{ for all } z\in C(\overline{\mathcal{O}})\ \mbox{ and } 
c := \| v - \bar{v} \|_{*}.
$$ 
Let $y:= u \vf$ and $\bar{y}:= \bar{u}\vf$.
We have
$$ 
y- \bar{y}  - \frac{\lambda}{2} \Delta(y - \bar{y}) + \lambda \alpha(y - \bar{y})
\frac{\lambda}{2} \vf \Delta (\frac{1}{\vf})(y- \bar{y} )-
 \frac{\lambda}{2} \Delta(y - \bar{y})=\vf (v- \bar{v} )\  \mbox{ in }  \mathcal{O},
$$
$$
\frac{\partial}{\partial \nu}(y - \bar{y}) - \frac{1}{\vf} \frac{\partial\vf}{\partial \nu} (y - \bar{y})
+ \beta(\frac{1}{\vf} y) - \beta(\frac{1}{\vf} \bar{y}) = 0 \; {\rm on} \; \Gamma. 
$$
This yields
$$
(y-\bar{y}- c) -  \frac{\lambda}{2} \Delta(y - \bar{y}- c)- 
 \lambda \vf \nabla (\frac{1}{\vf}) \nabla (y - \bar{y}- c)
 +(\lambda \alpha -\frac{\lambda}{2}\vf \Delta (\frac{1}{\vf}) (y - \bar{y}-c)
$$
$$
= \vf(v-\bar{v}) - (\lambda \alpha -\frac{\lambda}{2} \vf \Delta (\frac{1}{\vf}) )c -c \leqslant 0\ \mbox{ in } \mathcal{O},
$$
$$
\frac{\partial}{\partial\nu} (y-\bar{y}- c)= 
\frac{1}{\vf} \frac{\partial\vf}{\partial \nu} (y - \bar{y}) -\beta (\frac{1}{\vf}) - \beta (\frac{1}{\vf} \bar{y})\ \mbox{ on }  \Gamma.
$$
Multiplying by $(y-\bar{y}- c)^+$  and integrating  on $\mathcal{O}$, we get via Green's formula
$$
\int_\mathcal{O} ((y-\bar{y}- c)^+)^2 dx + \frac{\lambda}{2} \int_\mathcal{O} |\nabla (y-\bar{y}- c)^+ |^2 dx
$$
$$
\leqslant \frac 1 2 \int_\mathcal{O}  |(y-\bar{y}- c)^+|^2 dx 
+\frac{\lambda^2}{2} \int_\mathcal{O} |\vf \nabla(\frac{1}{\vf})|^2  ((y-\bar{y}- c)^+)^2 dx 
$$
$$
+ \int_\Gamma (\beta(\frac{1}{\vf}y) - \beta(\frac{1}{\vf} \bar{y})) (y-\bar{y} - c)^+ d \sigma
- \int_\Gamma \frac{1}{\vf} \frac{\partial\vf}{\partial \nu} (y- \bar{y}) (y-\bar{y} - c)^+ d \sigma .
$$
Taking into account that by $(2.4)$
$$
(\beta(\frac{1}{\vf}y) - \beta(\frac{1}{\vf} \bar{y})) (y-\bar{y} - c)^+ 
- \frac{1}{\vf} \frac{\partial\vf}{\partial \nu} (y- \bar{y}) (y-\bar{y} - c)^+  \leqslant 0\ \mbox{ on } \Gamma ,
$$
it follows that for $\lambda\in (0, \lambda_0)$, $\lambda_0$ sufficiently small, we have 
$\int_\mathcal{O} |(y-\bar{y}- c)^+|^2= 0$.
Hence $\varphi(u - \bar{u}) \leqslant c$ in $\overline{\mathcal{O}}$ 
and similarly it follows that $\varphi(u - \bar{u}) \geqslant -c$ in 
$\overline{\mathcal{O}}$, $\| u - \bar{u} \|_{*} \leqslant \| v - \bar{v} \|_{*}$. 
We conclude that $\mathcal{A}_0$ is quasi-$m$-accretive in $C(\overline{\mathcal{O}})$ as claimed.

By Crandall-Liggett theorem (see \cite{Bar10}, p. 131) $\mathcal{A}_0$ generates a 
$C_0$-semigroup of $\alpha$-quasi-contractions $(\widetilde{V}_t)_{t \geqslant 0}$ on 
$C(\overline{\mathcal{O}})$ (endowed with the norm $\| \cdot \|_*$), 
given by the exponential formula
$$
\widetilde{V}_tf = \mathop{\lim}\limits_{n\to\infty}(I + \dfrac{t}{n}\mathcal{A}_0)^{-n}f \; {\rm in} \; C(\overline{\mathcal{O}}) \; 
{\rm for \; all} \; f \in C(\overline{\mathcal{O}}) \; \mbox{uniformly in}\; t\; \mbox{on compact intervals}.
$$
Since $(I + \dfrac{t}{n}\mathcal{A}_0)^{-1}f = (I + \dfrac{t}{n}\mathcal{A})^{-1}f$ for all $t > 0$ 
and $f \in C(\overline{\mathcal{O}})$, and 
$V_tf = \mathop{\lim}\limits_{n\to \infty}(I + \dfrac{t}{n}\mathcal{A})^{-n}f$ in $L^2(\mathcal{O})$ for $f \in L^2(\mathcal{O})$, 
we infer that
$$
V_tf = \widetilde{V}_tf \; \mbox{ for   all } \; f \in C(\overline{\mathcal{O}}) \; {\rm and} \; t \geqslant 0.
$$
Therefore for all $f, \bar{f} \in C(\overline{\mathcal{O}})$
$$
\| V_t f - V_t \bar{f} \|_{*}   
\leqslant e^{\alpha t} \| f - \bar{f} \|_{*}, \; t \geqslant 0, \; \mbox{\rm and } 
\; \mathop{\lim}\limits_{t \searrow 0}\| V_tf - f \|_{*} = 0
$$
as claimed. The fact that $V_t(S) \subseteq S$ is a direct consequence of assertion $(iii)$ in Proposition 2.1.
\end{proof}

\begin{remark}
The results from Section 2 are valid for more general functions $\beta$ which satisfies the following condition:
{\rm $\beta$ is continuous and $u\longmapsto \beta(u)+\gamma u$
is monotonically nondecreasing on $[0, \infty)$, where $\gamma$ is defined by $(1.3).$}
\end{remark}

\section{Negative definite properties of the nonlinear evolution equation} 

Let $M := M(\overline{\mathcal{O}})$ be the set of all finite positive measures on $\overline{\mathcal{O}}$, 
endowed with the weak topology. 
It is a locally compact topological space with a countable base.
In order to construct a semigroup of (linear) kernels on $M$, induced by $(V_t)_{t \geqslant 0}$, 
we start with preliminaries on the harmonic analysis of  the semigroup $S := C_+(\overline{\mathcal{O}})$; 
for more details see \cite{BeChRe84}, \cite{Fi88}, and \cite{Dy02}.

A real-valued function $\varphi : S \longrightarrow \mathbb{R}$ is called {\it negative definite} if for each $n \geqslant 2$, 
$\alpha_1, \ldots, \alpha_n \in \mathbb{R}$ with $\alpha_1 + \ldots + \alpha_n = 0$, and $u_1, \ldots, u_n \in S$ 
we have $\mathop{\sum}\limits_{i,j}\alpha_i \alpha_j \varphi(u_i + u_j) \leqslant 0$.

A function $\varphi : S \longrightarrow \mathbb{R}$ is named {\it positive definite} provided that for each $n \geqslant 1$, $\alpha_1, \ldots, \alpha_n \in \mathbb{R}$, 
and $u_1, \ldots, u_n \in S$ we have $\mathop{\sum}\limits_{i,j}\alpha_i \alpha_j \varphi(u_i + u_j) \geqslant 0$.

Basic properties of the negative and positive definite functions are the following:\\

\noindent
$(3.1)\:$  If $\varphi : S \longrightarrow \mathbb{R}$ is linear or constant then it is negative definite.
The linear combination with positive coefficients of negative (resp. positive) definite functions is also negative (resp. positive) definite.
The pointwise limit of a sequence of negative (resp. positive) definite functions is negative (resp. positive) definite.\\[-2mm]

\noindent
$(3.2)\:$  Let $\varphi : S \longrightarrow \mathbb{R}$. 
Then $\varphi$ is negative definite if and only if $e^{-t\varphi}$ is positive definite for all $t > 0$.\\

We present now two results which are  essentially used in the semigroup approach to the construction of the superprocesses
(see e.g., \cite{Fi88} and \cite{Be11}).

\begin{lem}  
{ If $a \in \mathbb{R}_+$, $b \in \mathbb{R}$ and $\varphi : S \to \mathbb{R}$ is negative definite, 
then the function $- \beta \circ \varphi + a \varphi + b$ is also negative definite.} 
\end{lem}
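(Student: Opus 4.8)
The plan is to exhibit $-\beta\circ\varphi+a\varphi+b$ as a positive superposition of elementary negative definite functions and then conclude from the closure properties $(3.1)$ and $(3.2)$. I would first record the L\'evy--Khinchine type representation of $-\beta$: writing $b_0\in\mathbb{R}_+$ for the constant in $(1.2)$ (not the $b$ of the statement), one has $-\beta(r)=\int_0^\infty(1-e^{-sr})\,\eta(ds)+b_0 r$ for $r\geqslant 0$. Since the negative definite functions relevant here take values in $[0,\infty)$ --- being built from the positivity preserving family $(V_t)_{t\geqslant0}$ --- this gives
$$
-\beta\circ\varphi+a\varphi+b=\int_0^\infty\bigl(1-e^{-s\varphi}\bigr)\,\eta(ds)+(a+b_0)\varphi+b ,
$$
and it remains to check that each of the three summands on the right is negative definite on $S$.

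Two of them are immediate from $(3.1)$: the constant $b$ is negative definite, and, as $a\geqslant0$ and $b_0\geqslant0$, the function $(a+b_0)\varphi$ is a nonnegative multiple of the negative definite function $\varphi$. For the middle term I would work first at a fixed $s>0$: since $s\varphi$ is negative definite, $(3.2)$ gives that $e^{-s\varphi}$ is positive definite, whence for $\alpha_1,\dots,\alpha_n\in\mathbb{R}$ with $\alpha_1+\dots+\alpha_n=0$ and $u_1,\dots,u_n\in S$,
$$
\sum_{i,j}\alpha_i\alpha_j\bigl(1-e^{-s\varphi(u_i+u_j)}\bigr)=\Bigl(\sum_i\alpha_i\Bigr)^{2}-\sum_{i,j}\alpha_i\alpha_j e^{-s\varphi(u_i+u_j)}\leqslant 0 ,
$$
so $u\mapsto 1-e^{-s\varphi(u)}$ is negative definite for each $s>0$. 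To integrate this against $\eta$ I would note that $1-e^{-sr}\leqslant sr$ on $[0,\infty)$, so $\int_0^\infty(1-e^{-s\varphi(u)})\,\eta(ds)\leqslant\varphi(u)\int_{\mathbb{R}_+}s\,\eta(ds)<\infty$ by the hypothesis on $\eta$ in $(1.3)$, and then approximate $\eta$ by finitely supported measures, so that the $\eta$-integral becomes a pointwise limit of finite nonnegative linear combinations of the functions $u\mapsto1-e^{-s\varphi(u)}$; by the two closure statements in $(3.1)$ (stability under positive linear combinations and under pointwise limits) it is negative definite. One more application of $(3.1)$, now to the sum of the three negative definite summands, completes the proof.

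The only step that needs some care is the last one, the passage from finite sums to the $\eta$-integral: one should spell out the approximation of $\eta$ together with the dominated convergence estimate, so that the $\eta$-integral genuinely appears as an admissible (pointwise-limit-of-positive-combinations) superposition of negative definite functions. Everything else is a direct invocation of $(3.1)$ and $(3.2)$.
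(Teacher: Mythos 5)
Your proof is correct and follows essentially the same route as the paper's: both use the representation of $-\beta$ coming from $(1.2)$, deduce from $(3.2)$ that $e^{-s\varphi}$ is positive definite so that $1-e^{-s\varphi}$ is negative definite, integrate against $\eta$, and conclude by the closure properties in $(3.1)$. You merely make explicit the details the paper leaves implicit (the direct verification using $\sum_i\alpha_i=0$, the domination $1-e^{-sr}\leqslant sr$ guaranteeing integrability, the approximation of the $\eta$-integral by finite positive combinations, and the fact that the argument uses $\varphi\geqslant 0$).
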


\begin{proof} 
By $(3.2)$ the function $e^{-s\varphi}$ is positive definite for all $s \geqslant 0$.
So, by $(3.1)$ the function $(1 - e^{-s\varphi})$ is negative definite and therefore $\int_{0}^{\infty}(1 - e^{-s\varphi})\eta (ds)$ has the same property. 
Again by  $(3.1)$ we conclude that $- \beta \circ \varphi + a\varphi + b$ is negative definite.
\end{proof}

It is easy to see that the exponential mapping $e_f : M \longrightarrow [0, 1]$ 
is a Borel measurable function on $M$ and that 
$S \ni f \longmapsto e_f(\mu)$ is positive definite on $S$ for each $\mu \in M$.
Further we denote by $\cb(M)$ the Borel $\sigma$-algebra on $M$.

\begin{prop}   
{\rm (\cite{Fi88}, \cite{Be11}).} 
{The following assertions hold.

$(i)$ Let $\xi$ be a finite, positive measure on $(M, \mathcal{B}(M))$ and consider the map 
$\varphi_{\xi} : S \longrightarrow \mathbb{R}_+$ defined as
$$
\varphi_{\xi}(f) = \int_{M}e_f(\mu)\xi (d\mu), \; f \in S.
$$

Then $\varphi_{\xi}$ is positive definite and if $(f_n)_n \subset S$, $\mathop{\lim}\limits_{n\to\infty}\| f_n \|_{C(\overline{\mathcal{O}})} = 0$,
then $\mathop{\lim}\limits_{n\to\infty}\varphi_{\xi}(f_n) = \varphi_{\xi}(0)$.

$(ii)$ Let $\varphi : S \longrightarrow [0, 1]$ be a positive definite map such that $\mathop{\lim}\limits_{n\to\infty}\varphi(\frac{1}{n}) = \varphi(0)$.

Then there exists a unique finite positive measure $\xi$ on $(M, \mathcal{B}(M))$ such that $\varphi = \varphi_{\xi}$, that is
$$
\varphi(f) = \int_M e_f \, d\xi \; \mbox{ for  all } f \in S.
$$}
\end{prop}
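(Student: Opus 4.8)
Part $(i)$ is elementary and $(ii)$ carries the content; the plan for $(ii)$ is to represent the bounded positive definite function $\varphi$ on the abelian semigroup $(C_+(\overline{\mathcal{O}}),+)$ as an integral of bounded semicharacters (the semigroup form of Bochner's theorem), to identify those semicharacters with the points of $M$ together with one point at infinity, and then to use the continuity hypothesis to discard that point. For $(i)$, fix $\mu\in M$; since $e_{u_i+u_j}(\mu)=e^{-\int u_i\,d\mu}\,e^{-\int u_j\,d\mu}$, for all $\alpha_1,\dots,\alpha_n\in\mathbb{R}$ we have
$$
\sum_{i,j}\alpha_i\alpha_j\,\varphi_\xi(u_i+u_j)=\int_M\Big(\sum_i\alpha_i\,e^{-\int u_i\,d\mu}\Big)^{\!2}\,\xi(d\mu)\geqslant0,
$$
so $\varphi_\xi$ is positive definite. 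If $\|f_n\|_{C(\overline{\mathcal{O}})}\to0$ then $\bigl|\int f_n\,d\mu\bigr|\leqslant\|f_n\|_{C(\overline{\mathcal{O}})}\,\mu(\overline{\mathcal{O}})\to0$, so $0\leqslant e_{f_n}(\mu)\leqslant1$ and $e_{f_n}(\mu)\to e_0(\mu)=1$ pointwise in $\mu$; bounded convergence together with $\xi(M)<\infty$ gives $\varphi_\xi(f_n)\to\xi(M)=\varphi_\xi(0)$.

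For $(ii)$, if $\varphi(0)=0$ then the inequality $\varphi(f)^2\leqslant\varphi(0)\varphi(2f)$ (the $2\times2$ minor of positive definiteness with entries $0,f$) forces $\varphi\equiv0$, and $\xi=0$ is the unique representing measure; so we may assume $\varphi(0)>0$. Being bounded, $\varphi$ is covered by the integral representation of positive definite functions on an abelian semigroup with the identical involution (\cite{BeChRe84}): there is a unique Radon measure $\nu$ on the compact space $\widehat{S}_b$ of bounded semicharacters of $(C_+(\overline{\mathcal{O}}),+)$ (a semicharacter being a $\chi\not\equiv0$ with $\chi(f+g)=\chi(f)\chi(g)$), taken with the topology of pointwise convergence, such that $\varphi(f)=\int_{\widehat{S}_b}\chi(f)\,\nu(d\chi)$ for all $f\in C_+(\overline{\mathcal{O}})$; alternatively $\nu$ comes from a GNS construction together with the spectral theorem applied to the commuting family of positive self-adjoint contractions $T_f\colon\xi_g\mapsto\xi_{f+g}$. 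The crucial step is to describe $\widehat{S}_b$. A bounded semicharacter has $\chi(0)=1$ and $0\leqslant\chi\leqslant1$; writing $\psi:=-\log\chi\in[0,+\infty]$, additivity of $\psi$ forces monotonicity, and either $\psi\equiv+\infty$ off the origin --- the single degenerate semicharacter $\chi_\infty$, with $\chi_\infty(0)=1$ and $\chi_\infty(f)=0$ for $f\neq0$ --- or $\psi$ is finite at some strictly positive $f_0$, hence (by monotonicity) finite everywhere, in which case $\psi$ is additive, $\mathbb{Q}_+$-homogeneous, monotone and bounded on order-bounded sets, so it extends to a bounded positive linear functional on $C(\overline{\mathcal{O}})$, i.e.\ $\psi(f)=\int f\,d\mu$ for a unique $\mu\in M$ by the Riesz representation theorem, whence $\chi=e_\mu$. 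Thus $\mu\longmapsto e_\mu$ is a bijection of $M$ onto $\widehat{S}_b\setminus\{\chi_\infty\}$, and it is a homeomorphism onto its image, convergence in both topologies being convergence of $\int f\,d\mu$ for all $f\in C(\overline{\mathcal{O}})$.

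Transporting $\nu$ along this identification and putting $\widetilde{\nu}:=\nu|_M$ (legitimate, since $\{\chi_\infty\}$ is closed), we obtain $\varphi(f)=\nu(\{\chi_\infty\})\chi_\infty(f)+\int_M e_f\,d\widetilde{\nu}$, i.e.\ $\varphi(f)=\int_M e_f\,d\widetilde{\nu}$ whenever $f\neq0$. Evaluating at the constant functions $f=\tfrac1n$ and letting $n\to\infty$, monotone convergence gives $\varphi(\tfrac1n)\to\widetilde{\nu}(M)$, while the hypothesis gives $\varphi(\tfrac1n)\to\varphi(0)=\widetilde{\nu}(M)+\nu(\{\chi_\infty\})$; hence $\nu(\{\chi_\infty\})=0$, and $\xi:=\widetilde{\nu}$ is a finite positive measure on $M$ with $\varphi(f)=\int_M e_f\,d\xi$ for every $f\in C_+(\overline{\mathcal{O}})$ (the case $f=0$ being clear, both sides equalling $\widetilde{\nu}(M)$). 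For uniqueness, if $\xi'$ is another representing measure then $\int_M e_f\,d\xi=\int_M e_f\,d\xi'$ for all $f\in C_+(\overline{\mathcal{O}})$; the functions $e_f$ with $f\geqslant\rho$ for a constant $\rho>0$ belong to $C_0(M)$, are stable under multiplication, separate the points of $M$ and vanish nowhere, hence span a subalgebra of $C_0(M)$ dense for the supremum norm by the Stone--Weierstrass theorem, so $\xi=\xi'$.

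The main obstacle is the identification $\widehat{S}_b=M\cup\{\chi_\infty\}$: one must show that every non-degenerate bounded semicharacter is of the form $e_\mu$ --- i.e.\ promote an abstract additive, $\mathbb{Q}_+$-homogeneous functional to a genuine finite measure by means of monotonicity, boundedness and the Riesz theorem --- and check that the pointwise topology on semicharacters matches the weak topology on $M$, while keeping careful track of the extra point $\chi_\infty$, which is exactly where the hypothesis $\varphi(1/n)\to\varphi(0)$ enters. Granting this, existence and uniqueness follow formally from the semigroup Bochner theorem and Stone--Weierstrass.
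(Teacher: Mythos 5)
The paper offers no proof of this proposition --- it is quoted from \cite{Fi88} and \cite{Be11} --- so there is nothing to compare line by line; your route (part $(i)$ by direct computation, part $(ii)$ via the Berg--Christensen--Ressel integral representation over the compact space of bounded semicharacters, followed by elimination of the degenerate ones through the hypothesis $\varphi(1/n)\to\varphi(0)$) is essentially the one used in those references. Part $(i)$ is correct as written.

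In part $(ii)$ there is a genuine gap in the step you yourself single out as crucial: the claimed dichotomy for bounded semicharacters is false. Writing $\chi=e^{-\psi}$ with $\psi:S\to[0,+\infty]$ additive, it is not true that either $\psi\equiv+\infty$ off $0$ or $\psi$ is finite at some strictly positive $f_0$. For example, fix $x_0\in\overline{\mathcal{O}}$ and set $\psi(f)=0$ if $f(x_0)=0$ and $\psi(f)=+\infty$ otherwise; since $f,g\geqslant 0$, this $\psi$ is additive on $C_+(\overline{\mathcal{O}})$, the associated $\chi$ is a bounded semicharacter distinct from $\chi_\infty$, and it is not of the form $e_\mu$ for any finite $\mu$ (indeed $\chi$ vanishes on the constant function $1$, whereas $e_\mu(1)=e^{-\mu(\overline{\mathcal{O}})}>0$). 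So $\widehat{S}_b\neq M\cup\{\chi_\infty\}$, and transporting $\nu$ to $M$ after removing the single point $\chi_\infty$ is not justified as written. The repair is short, because the correct dichotomy is governed by the value $\chi(1)$ at the constant function $1$: if $\chi(1)>0$, i.e.\ $\psi(1)<\infty$, then $\psi(f)\leqslant\lceil\|f\|_{C(\overline{\mathcal{O}})}\rceil\,\psi(1)<\infty$ for every $f\in S$ and your monotonicity--homogeneity--Riesz argument yields $\chi=e_\mu$ with $\mu\in M$; if $\chi(1)=0$ then $\chi(1/n)=\chi(1)^{1/n}=0$ for every $n$. Hence $\lim_n\chi(1/n)=1$ on $\{\chi(1)>0\}$ and $=0$ on its complement, dominated convergence gives $\varphi(1/n)\to\nu(\{\chi:\chi(1)>0\})$, and the hypothesis $\varphi(1/n)\to\varphi(0)=\nu(\widehat{S}_b)$ forces $\nu$ to be carried by the open set $\{\chi:\chi(1)>0\}$, which is homeomorphic to $M$. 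With this correction the remainder of your argument (restriction of $\nu$, the case $f=0$, and uniqueness via Stone--Weierstrass in $C_0(M)$) goes through.
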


We can state now the main result of this section.

\begin{prop}   
{\it The following assertions hold.

$(i)$ For each $t \geqslant 0$ and $x \in \overline{\mathcal{O}}$ the function $S \ni f \longmapsto V_tf(x)$ is negative definite.

$(ii)$ For each $t \geqslant 0$ there exists a unique sub-Markovian kernel $Q_t$ on $(M, \mathcal{B}(M))$ such that
$$
Q_t(e_f) = e_{V_tf} \; for \; all \; f \in S. \leqno(3.3)
$$

$(iii)$ The family $(Q_t)_{t \geqslant 0}$ induces a Feller semigroup
on $C_0(M)$ 
(:= the space of real-valued continuous functions on $M$, vanishing at infinity).}
\end{prop}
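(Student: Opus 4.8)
The plan is to prove $(i)$ by an approximation scheme in which the branching term $\beta$ is never split into positive and negative parts nor differentiated but enters only as $-\beta$ composed with an earlier iterate, so that negative definiteness propagates along the scheme by Lemma 3.1; parts $(ii)$ and $(iii)$ then follow from Proposition 3.3 and a routine Feller verification. For $(i)$, fix $t>0$ and $x\in\overline{\mathcal O}$ and represent $V_tf$ by the mild (Duhamel) form of $(1.1)$ in which the boundary data $g-\beta(u)$ is integrated against the boundary additive functional of the reflected Brownian motion $B$: writing $P^\alpha_th(x):=\Ee^x[e^{-\alpha t}h(B_t)]$ and $L$ for the positive continuous additive functional of $B$ carried by $\Gamma$ (with Revuz measure a multiple of $\sigma$), one has, for a fixed constant $c>0$,
$$
V_tf(x)=P^\alpha_tf(x)+c\,\Ee^x\!\int_0^t e^{-\alpha s}\big(g-\beta(V_{t-s}f)\big)(B_s)\,\d L_s ,
$$
the identification of the right-hand side with $V_tf$ coming from the uniqueness theory of Section 2, first for $f\in D(\mathcal A)$ and then, by the $C_0$-continuity of $(V_t)$ on $C(\overline{\mathcal O})$ (Proposition 2.2), for all $f\in C(\overline{\mathcal O})$. (Equivalently one may start from the exponential formula $V_tf=\lim_n(I+\tfrac tn\mathcal A)^{-n}f$ of Proposition 2.2 and run the analogous Picard iteration inside each resolvent equation, realising $(I+\lambda\mathcal A)^{-1}v$ as the fixed point of $w\mapsto R_\lambda v+H_\lambda(g-\beta(w))$ with $R_\lambda,H_\lambda$ the positivity-preserving interior resolvent and boundary Poisson operators, the contraction being verified in the weighted norm $\|\cdot\|_*$ of Proposition 2.2.) Setting $u^{(0)}_t:=P^\alpha_tf$ and $u^{(k+1)}_t(x):=P^\alpha_tf(x)+c\,\Ee^x\int_0^t e^{-\alpha s}(g-\beta(u^{(k)}_{t-s}))(B_s)\,\d L_s$, and using that $\beta$ is Lipschitz by $(1.3)$ and $\sup_x\Ee^x[L_T]<\infty$, a standard contraction argument gives $u^{(k)}_t(x)\to V_tf(x)$ uniformly in $(t,x)\in[0,T]\times\overline{\mathcal O}$.

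The key point is then an induction on $k$: the map $S\ni f\longmapsto u^{(k)}_t(x)$ is negative definite, for all $t,x$. For $k=0$ it is $f\longmapsto e^{-\alpha t}\Ee^x[f(B_t)]$, i.e.\ an integral of the negative definite (because linear) evaluation maps $f\longmapsto f(y)$ against the positive law of $B_t$ under $\Pp^x$, and an integral of negative definite functions against a positive measure is negative definite directly from the definition. For the step: at each fixed $(s,y)$ with $y\in\Gamma$, the induction hypothesis makes $f\longmapsto u^{(k)}_{t-s}(y)$ negative definite, so by Lemma 3.1 (with $a=b=0$) so is $f\longmapsto-\beta(u^{(k)}_{t-s}(y))$, and adding the constant $g(y)$ preserves this; hence the second term of $u^{(k+1)}_t(x)$, being an integral of these negative definite functions against the positive measure $\rho_{t,x}(A):=c\,\Ee^x\!\int_0^t e^{-\alpha s}\mathbf 1_A(s,B_s)\,\d L_s$ on $(0,t)\times\overline{\mathcal O}$, is negative definite, and so is its sum with $P^\alpha_tf(x)$. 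Letting $k\to\infty$ and invoking $(3.1)$ (a pointwise limit of negative definite functions is negative definite) yields $(i)$.

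For $(ii)$, fix $t\geqslant0$ and $\mu\in M$ and put $\varphi(f):=e^{-\int V_tf\,\d\mu}$, $f\in S$. By $(i)$ the map $f\longmapsto\int V_tf\,\d\mu$ is negative definite (again an integral of negative definite functions against a positive measure), so $\varphi$ is positive definite by $(3.2)$, and $\|V_t(1/n)-V_t0\|_{C(\overline{\mathcal O})}\to0$ (Proposition 2.2) gives $\varphi(1/n)\to\varphi(0)$. Proposition 3.3$(ii)$ then yields a unique finite positive measure $Q_t(\mu,\cdot)$ on $(M,\mathcal B(M))$ with $\int_M e_f\,\d Q_t(\mu,\cdot)=\varphi(f)=e_{V_tf}(\mu)$ for all $f\in S$, which is $(3.3)$. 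Since $\mu\longmapsto e^{-\int V_tf\,\d\mu}$ is weakly continuous in $\mu$ for each $f\in S$ and $\{e_f:f\in S\}$ is a measure-determining family on $M$ (Laplace functionals; cf.\ \cite{Fi88}, \cite{Dy02}), a monotone-class argument shows $\mu\longmapsto Q_t(\mu,A)$ is $\mathcal B(M)$-measurable, so $Q_t$ is a kernel, and it is sub-Markovian because $Q_t\mathbf 1(\mu)=Q_t(e_0)(\mu)=e^{-\int V_t0\,\d\mu}\leqslant1$ by $V_t0\geqslant0$ (Proposition 2.1$(iii)$). Uniqueness of $Q_t$ subject to $(3.3)$ follows from the same measure-determining property.

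For $(iii)$: the semigroup identity holds on the generators, $Q_tQ_s(e_f)=Q_t(e_{V_sf})=e_{V_tV_sf}=e_{V_{t+s}f}=Q_{t+s}(e_f)$ (legitimate since $V_sf\in S$ by Proposition 2.2), and extends to $Q_tQ_s=Q_{t+s}$ by measure-determinacy. If $f\in S$ with $\inf f=:c>0$ then $V_tf\geqslant e^{-\alpha t}c>0$ by Proposition 2.1$(iii)$, so $Q_te_f=e_{V_tf}$ is continuous on $M$ and vanishes at infinity (as $\mu(\overline{\mathcal O})\to\infty$ forces $\int V_tf\,\d\mu\to\infty$), i.e.\ $Q_te_f\in C_0(M)$; the real algebra generated by $\{e_f:f\in S,\ \inf f>0\}$ separates points of $M$, does not vanish at any point, and lies in $C_0(M)$, hence is dense there by Stone--Weierstrass, and since $Q_t$ is a sup-norm contraction it maps $C_0(M)$ into itself. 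For strong continuity, again check on this dense algebra: for $f\in S$, $\inf f=c>0$, writing $\delta_t:=\|V_tf-f\|_{C(\overline{\mathcal O})}\to0$ one gets, for $\delta_t<c/2$,
$$
\|Q_te_f-e_f\|_\infty=\sup_{\mu\in M}\big|e^{-\int V_tf\,\d\mu}-e^{-\int f\,\d\mu}\big|\leqslant\sup_{m\geqslant0}e^{-(c/2)m}\big(e^{\delta_t m}-1\big),
$$
and the last supremum tends to $0$ as $\delta_t\to0$ (it is attained at a value of $m$ bounded uniformly in $\delta_t$, where the expression vanishes in the limit); density plus the contraction property then give $\lim_{t\searrow0}\|Q_tF-F\|_\infty=0$ for all $F\in C_0(M)$, and with positivity and the sub-Markov property this exhibits $(Q_t)_{t\geqslant0}$ as a Feller semigroup on $C_0(M)$. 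The only real obstacle is $(i)$: one must choose the approximation of $V_tf$ so that it is compatible with negative definiteness — $\beta$ appearing only as $-\beta$ composed with an iterate already known to be negative definite, never split into $\pm$ parts nor differentiated, so that Lemma 3.1 applies verbatim at each step — and then identify its limit with $V_tf$ through the uniqueness theory of Section 2; parts $(ii)$ and $(iii)$ are the standard harmonic-analysis-of-semigroups machinery built on Proposition 3.3 and Stone--Weierstrass.
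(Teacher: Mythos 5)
Your proposal is correct, and for parts $(ii)$ and $(iii)$ it follows essentially the route of the paper: Proposition 3.2$(ii)$ applied to $\varphi(f)=e_{V_tf}(\mu)$, a monotone class argument for measurability, sub-Markovianity from $V_t0\geqslant 0$, and Stone--Weierstrass on the algebra generated by $\{e_f: f\in S,\ \inf f>0\}$ for the Feller property (your explicit uniform estimate $\sup_{m\geqslant 0}e^{-(c/2)m}(e^{\delta_t m}-1)\to 0$ is in fact a cleaner justification of strong continuity than the paper's appeal to pointwise convergence alone). The genuine difference is in part $(i)$. The paper also iterates the boundary nonlinearity (its scheme $(3.4)$ is exactly your Picard iteration), but it represents each iterate \emph{weakly}, via the d'Alembert formula $(3.7)$ tested against nonnegative $\psi\in H^1(\mathcal{O})$, so that Lemma 3.1 yields negative definiteness of $f\longmapsto\langle\psi,u_{n+1}(t)\rangle_2$ and the pointwise statement for $V_tf(x)$ is then obtained in the limit; you instead represent each iterate \emph{pointwise} through the boundary local time $L$ of the reflected Brownian motion, integrating $g-\beta(u^{(k)})$ against the positive measure $\rho_{t,x}$. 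Your version buys a direct pointwise conclusion (avoiding the weak-to-pointwise passage that the paper leaves implicit), at the cost of having to justify the probabilistic mild formulation and its identification with $V_tf$ (Revuz correspondence for $L$, uniqueness from Section 2) -- standard facts for smooth domains, and you correctly flag them, but they are additional input not needed in the paper's purely variational route. The one point worth making explicit in either version is that all iterates are nonnegative (true since $f\geqslant 0$, $g\geqslant 0$ and $-\beta\geqslant 0$), which is needed for Lemma 3.1 to apply to $\beta$ as defined by $(1.2)$; with that noted, the sign structure of your Duhamel term ($g-\beta(u^{(k)})$ with a positive weight) matches the paper's $(3.7)$ and the induction goes through.
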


\begin{proof}  
$(i)$  We consider the iteration process:
$$
\left\{ \begin{array}{ll}
\dfrac{\partial u_{n + 1}}{\partial t} - \frac{1}{2}  \Delta u_{n + 1} + \alpha u_{n + 1} = 0 & {\rm in} \; (0, T) \times \mathcal{O},\\[3mm]
\dfrac{\partial u_{n + 1}}{\partial \nu} + \beta(u_n) = g & {\rm on} \; (0, T) \times \Gamma,\\[3mm]
u_{n + 1}(0) = f & {\rm in} \; \mathcal{O}, \; f \in S.
\end{array}\right. \leqno(3.4)
$$

We claim that for $n \to \infty$, $u_n \longrightarrow u$ in 
$L^2(0, T; H^{1}(\mathcal{O}))$ and 
$\beta(u_n) \longrightarrow \beta(u)$ in $L^2((0, T) \times \Gamma)$, 
where $u$ is the solution of $(1.1)$.
Indeed, for each $v\in L^2(\Gamma)$ we denote by $F(v)$ the trace $\tau(u)$ of $u$, solution to 
$$
\left\{ \begin{array}{ll}
\dfrac{\partial u}{\partial t} - \frac{1}{2} \Delta u + \alpha u = 0 & {\rm in} \;  [0, T) \times \mathcal{O},\\[4mm]
\dfrac{\partial u}{\partial \nu} + \beta(v) = g & {\rm on} \; [0, T) \times \Gamma,\\[4mm]
u(0, \cdot) = f & {\rm in} \; \mathcal{O}.
\end{array}\right.
$$
Here we take the norm of  $H^{1}(\mathcal{O})$ as 
$|| u ||_{H^{1}(\mathcal{O})} := (\int_{\mathcal{O}} |\nabla u|^2 + 2\alpha u^2) dx)^{\frac{1}{2}}.$
We prove first that for $||\beta||_{\rm Lip}$ sufficiently small, $F$ is a contraction on 
$L^2((0,T)\times \Gamma)$.
If $v, \bar{v} \in L^2(\Gamma)$ and  $u, \bar{u}$  are related to $v$ and respectively $\bar{v}$ as before, then
$$
\left\{ \begin{array}{ll}
\dfrac{\partial}{\partial t}  (u- \bar{u}) - \frac{1}{2} \Delta (u- \bar{u})+ \alpha (u- \bar{u}) = 0 & {\rm in} \;  [0, T) \times \mathcal{O},\\[4mm]
\dfrac{\partial}{\partial \nu} (u- \bar{u})  + \beta(v) - \beta(\bar{v})= g & {\rm on} \; [0, T) \times \Gamma,\\[4mm]
(u- \bar{u}) (0, \cdot) = 0 & {\rm in} \; \mathcal{O}.
\end{array}\right.
$$
This yields 
$$
\frac12 | u(t) - \bar{u}(t)|^2_{L^2(\mathcal{O})} + 
\frac{1}{2}   \int_0^t   \int_{\mathcal{O}} |\nabla(u - \bar{u} )|^2 ds dx
+ \alpha  \int_0^t \int_{\mathcal{O}}  | u - \bar{u} |^2 ds dx \leqslant
\leqno{(3.5)}
$$
$$
 \int_0^t \int_{\Gamma} |\beta(v)-\beta(\bar{v})|   | u - \bar{u} | ds d\sigma \leqslant
 $$
 $$
 ||\beta||_{\rm Lip} \int_0^t  ||v- \bar{v}||_{L^2(\Gamma)}\,  ||u- \bar{u}||_{L^2(\Gamma)} ds
 \leqslant
\gamma^{-\frac{1}{2}}  ||\beta||_{\rm Lip}\,  ||v- \bar{v}||_{L^2((0, T) \times \Gamma)}  
||u- \bar{u}||_{L^2(0,T; H^1(\mathcal{O}))},
$$
where $\gamma$ is given by $(1.3)$.
Hence 
$||u-\bar{u}||_{L^2(0,T; H^1(\mathcal{O}))}\leqslant
2 \gamma^{-\frac{1}{2}}    ||\beta||_{\rm Lip}  ||v-\bar{v}||_{L^2((0, T) \times \Gamma)}  $
and this yields by the trace theorem
$||u-\bar{u}||_{L^2((0, T) \times \Gamma)} 
\leqslant
\frac 2 \gamma   ||\beta||_{\rm Lip}  ||v-\bar{v}||_{L^2((0, T) \times \Gamma)}  $.
Consequently  $F$ is a contraction for 
$$
 ||\beta||_{\rm Lip} <  \frac\gamma 2  \leqno{(3.6)}
$$
and so, the sequence $(u_n)_{n\geqslant 0}$ which is defined by $\tau(u_{n+1})= F(\tau(u_n))$, is strongly convergent in $L^2(\Gamma)$
to $\tau(u)$.
This implies also by $(3.5)$  that $(u_n)_{n\geqslant 0}$ is convergent in
$L^2(0, T; H^1(\mathcal{O}) )\cap C([0,T]; L^2(\mathcal{O})).$
Now we can get rid of condition $(3.6)$ by rescaling equation $(1.1)$ via the transformation $t\longrightarrow \lambda s$.\\

\noindent
{\bf Representation of $u_{n + 1}$ as the solution of an integral equation}. 
Define the linear continuous operator $A : H^1(\mathcal{O})  \longrightarrow (H^1(\mathcal{O}))'$ as
$$
{}_{(H^1(\mathcal{O}))'}\langle Au, \psi \rangle_{H^1(\mathcal{O})} :=
\frac{1}{2}  \int_{\mathcal{O}}(\nabla u \cdot \nabla \psi + 2 \alpha u) dx \; 
\mbox{ for  all } \; u, \psi \in H^1(\mathcal{O}),
$$
and also the mapping $\bar{\beta} : H^1(\mathcal{O}) \longrightarrow (H^1(\mathcal{O}))'$ as
$$
{}_{(H^1(\mathcal{O}))'}\langle \bar{\beta}(u), \psi \rangle_{H^1(\mathcal{O})} = 
- \frac{1}{2}   \int_{\Gamma} (\beta(u)-g) \tau(\psi) d\sigma \; \mbox{ for all } \; u, \psi \in H^1(\mathcal{O}),
$$
where $\tau(\psi) \in L^2(\Gamma)$ is the trace of $\psi$ on $\Gamma$.
Then $(3.4)$  is equivalent with
$$
\left\{ \begin{array}{ll}
\dfrac{\partial u_{n + 1}}{\partial t} = - A u_{n + 1} + \bar{\beta}(u_n) & {\rm in} \; (0, T) \times \mathcal{O},\\[3mm]
u_{n + 1}(0) = f & {\rm in} \; \mathcal{O},
\end{array}\right.
$$
which may be rewritten as (d'Alembert formula)
$$
\langle \psi, u_{n + 1}(t)\rangle_2  = \langle e^{At}\psi, f \rangle_2 
-  \frac{1}{2}  \int_{0}^{t} \int_{\Gamma} \tau(e^{A(t - s)}\psi) \, (\beta(u_n(s)) - g)\,  d\sigma ds 
\leqno{(3.7)}
$$
$$
\hspace*{93mm} {\rm for \; all} \; \psi \in H^1(\mathcal{O}), \; 0 < t < T, \; n \in \mathbb{N}. 
$$
By Lemma 3.1 and $(3.7)$  it follows inductively that the function 
$f \longmapsto \langle \psi, u_{n + 1}(t) \rangle_2$ is negative definite for all $\psi \in H^1(\mathcal{O})$, $\psi \geqslant 0$.
Passing to the limit and using (3.1) we get  that 
$u := \mathop{\lim}\limits_{n\to\infty} u_n$ has the same property and consequently assertion $(i)$ holds.

$(ii)$  From  $(i)$ and $(3.2)$  it follows that for each $\mu \in M$ the map
$$
f \longmapsto e_{V_tf}(\mu), \; f \in S,
$$
is positive definite and by Proposition 2.2 we have $\mathop{\lim}\limits_{n\to\infty} e_{V_t\frac{1}{n}}(\mu) = e_{V_t0}(\mu)$.
Proposition 3.2 $(ii)$ implies the existence of a finite measure $Q_{t, \mu}$ on $M$ such that
$$
Q_{t, \mu}(e_f) = e_{V_tf}(\mu) \; {\rm for \; all} \; f \in S.
$$
We have $Q_{t, \mu}(1) = Q_{t, \mu}(e_0) = e_{V_t0}(\mu) \leqslant 1$ since $V_t0 \geqslant 0$, so $Q_{t, \mu}$ 
is a sub-probability on $M$. The map $\mu \longmapsto Q_{t, \mu}(e_f)$ is $\mathcal{B}(M)$-measurable for all $f \in S$ 
and since the family $\{ e_f : f \in S \}$ generates the $\sigma$ - algebra $\mathcal{B}(M)$, 
we deduce by a monotone class argument that $\mu \longmapsto Q_{t, \mu}(F)$ is also $\mathcal{B}(M)$-measurable 
for all positive $\mathcal{B}(M)$-measurable function $F$ on $M$. Consequently
$$
F \longmapsto Q_tF(\mu) := Q_{t, \mu}(F), \; \mu \in M, 
$$
defines a sub-Markovian kernel on $M$.

$(iii)$ Since the family $(V_t)_{t \geqslant 0}$ is a (nonlinear) semigroup it follows that $(Q_t)_{t \geqslant 0}$ is semigroup of sub-Markovian kernels on $M$.
Let $S' := \{ f \in S : f > 0 \}$, $\; \widehat{S'} := \{ e_f : f \in S' \}$ and $[\widehat{S'}]$ the linear space spanned by $\widehat{S'}$. 
Since by Proposition 2.1 $(iii)$  
$V_t(S') \subseteq S'$ for all $t \geqslant 0$, it follows that $Q_t(\widehat{S'}) \subseteq \widehat{S'}$ 
and therefore $Q_t([\widehat{S'}]) \subset [\widehat{S'}]$. 
Because $[\widehat{S'}]$ is an algebra, 
separating the points of $M$ (by Stone-Weierstrass Theorem), 
it is a dense subset of $C_0(M)$ and therefore $Q_t(C_0(M)) \subseteq C_0(M)$ for all $t \geqslant 0$.
By Proposition 2.2 we have $\mathop{\lim}\limits_{t \searrow 0}\| V_tf - f \|_{C(\overline{\mathcal{O}})}=0$ 
and thus $\mathop{\lim}\limits_{t \searrow 0} Q_t(e_f)(\mu) = e_f(\mu)$ for all $\mu \in M$, hence
$Q_t(F)$ converges to $F$  for every $F \in [\widehat{S'}]$,  pointwise on $M$, as $t$ decreases to zero. 
We conclude that the above pointwise convergence holds for every $F \in C_0(M)$ 
and therefore $(Q_t)_{t \geqslant 0}$ is a $C_0$-semigroup on $C_0(M)$.
\end{proof}

\section{Measure-valued branching processes}

\subsection{Branching processes on the closure of the open set}  

In this subsection we associate to the equation $(1.1)$   a branching process  with state space the set $M = M(\overline{\mathcal{O}})$ of all finite measures on the closure of the open set $\mathcal{O}$.

Recall that if $p_1, p_2$ are two finite measures on $M $, 
then their convolution $p_1 \ast p_2$ is the finite measure on $M$ defined for every $h \in \mathcal{B}_+(M)$ 
(:=   the set of all positive $\mathcal{B}(M)$-measurable functions on $M$)
by
$$
\int_M p_1 \ast p_2 (d\nu) h(\nu) := \int_M p_1(d\nu_1)\int_M p_2(d\nu_2)h(\nu_1 + \nu_2).
$$

A bounded kernel $Q$ on $(M, \mathcal{B}(M))$ is called {\it branching kernel} provided that
$$
Q_{\mu + \nu} = Q_{\mu} \ast Q_{\nu} \; {\rm for \; all} \; \mu, \nu \in M,
$$
where $Q_{\mu}$ denotes the measure on $M$ such that $\int h d Q_{\mu} = Q h(\mu)$ for all $h \in \mathcal{B}_+(M)$.\\

A right (Markov) process with state space $M$ is called {\it branching process} provided that its transition function is formed by branching kernels. 
The probabilistic interpretation of this analytic branching property of a process is as follows:
{\it if we take two independent versions $X$ and $X'$ of the process, starting respectively from two measures $\mu$ and $\mu'$, 
then $X + X'$ and the process starting from $\mu + \mu'$ are equal in distribution.}

\begin{exam} 
Let $(P_t)_{t \geqslant 0}$ be the transition function of the reflected Brownian motion on $\overline{\mathcal{O}}$.

(i) If $t \geqslant 0$ and $\alpha \geqslant 0$ then the kernel $Q_t^0$ on $M$ defined as
$$
Q_t^0 F(\mu) := F(\mu \circ e^{-\alpha t}P_t), \; F \in \mathcal{B}_+(M), \; \mu \in M,
$$
is a branching kernel.
(ii) The linear semiflow on $M$, $\mu \longmapsto \mu \circ e^{-\alpha t}P_t$,  
is  a continuous path branching (deterministic) process with transition function  $(Q_t^0)_{t \geqslant 0}$.
If $f \in C_+^2(M)$, $F := e_f$,  and $\mu \in M$, then there exists 
$$
\mathop{\lim}\limits_{t \searrow 0} \dfrac{Q_t^0 F(\mu) - F(\mu)}{t} =: L^0 F(\mu)
$$ 

\noindent and we have 
$L^0 F(\mu) = \ds\int_{\overline{\mathcal{O}}} \mu (dx)[\frac{1}{2} \Delta F'(\mu, x) - \alpha F'(\mu, x)]$, 
where recall that the {\rm variational derivative} of a function $F : M \longrightarrow \mathbb{R}$ is
$$
F'(\mu, x) := \mathop{\lim}\limits_{t \searrow 0}\dfrac{1}{t}(F(\mu + t\delta_x) - F(\mu)), \; \mu \in M, \; x \in \overline{\mathcal{O}}.
$$
\end{exam}

Now, we present the main result of this paper. Let $(Q_t)_{t \geqslant 0}$ be the 
Feller semigroup 
given by assertion $(iii)$ of Proposition 3.3, induced by the solution of $(1.1)$.

\begin{thm}  
There exists a branching Markov process $X = (X_t)_{t \geqslant 0}$ with state space $M$, such that the following assertions hold.

$(i)$ $X$ is a Hunt process with
transition function $(Q_t)_{t \geqslant 0}$, 
or equivalently, for every $f \in S$, the solution $(V_tf)_{t \geqslant 0}$ of the nonlinear parabolic problem $(1.1)$ has the representation
$$
V_tf(x) = - \ln \mathbb{E}^{\delta_x}(e_f(X_t); t < \zeta), \; t \geqslant 0, \; x \in \overline{\mathcal{O}}, \leqno{(4.1)}
$$

\noindent where $\zeta$ denotes the life time of $X$.

$(ii)$ Let $(L, D(L))$ be the infinitesimal generator of $X$, that is $Q_t = e^{tL}$, $t \geqslant 0$, i.e. $(L, D(L))$ 
is the generator of the $C_0$-semigroup $(Q_t)_{t \geqslant 0}$ on $C_0(M)$. If $f \in D(\mathcal{A})$, $F:= e_f$, 
and $\mu \in H^1(\mathcal{O})$ with $\Delta \mu \in H$, 
then there exists $\mathop{\lim}\limits_{t \searrow 0}\dfrac{Q_t F(\mu) - F(\mu)}{t} =: L F(\mu)$  and
$$
\begin{array}{cc}
\ds L F(\mu) = \langle \frac{1}{2}  \Delta \mu - \alpha \mu, F'(\mu) \rangle_2 - 
 \frac{1}{2}   \int_{\Gamma} \dfrac{\partial \mu}{\partial \nu}  F'(\mu)\,  d\sigma  \\[3mm]
+\ds\int_{\Gamma} \mu |_{\Gamma} (dy)
\left(\int_{0}^{\infty}[F(\mu + s\delta_y) - F(\mu)] \eta (ds) -  \frac{1}{2}  g(y) F(\mu)- bF'(\mu, y)\right).
\end{array} \leqno(4.2)
$$
\end{thm}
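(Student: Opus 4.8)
The plan is to prove $(i)$ by the ``semigroup approach'' of the cited literature --- first checking that the kernels $Q_t$ of Proposition 3.3 are \emph{branching} kernels, then quoting the general construction of a branching Hunt process from a Feller branching semigroup --- and to prove $(ii)$ by differentiating the explicit Laplace-functional expression $Q_tF(\mu)=e_{V_tf}(\mu)$ and converting the outcome by Green's formula.

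\textbf{Assertion $(i)$.} First I would verify the branching property of $(Q_t)_{t\geqslant 0}$. Using $(3.3)$ and the multiplicativity $e_f(\mu+\nu)=e_f(\mu)e_f(\nu)$, one has for all $\mu,\nu\in M$ and all $f\in S$
$$
Q_{t,\mu+\nu}(e_f)=e_{V_tf}(\mu+\nu)=e_{V_tf}(\mu)\,e_{V_tf}(\nu)=Q_{t,\mu}(e_f)\,Q_{t,\nu}(e_f)=\bigl(Q_{t,\mu}\ast Q_{t,\nu}\bigr)(e_f);
$$
since $\{e_f:f\in S\}$ is stable under multiplication and generates $\mathcal B(M)$, a monotone-class argument gives $Q_{t,\mu+\nu}=Q_{t,\mu}\ast Q_{t,\nu}$, i.e. every $Q_t$ is a branching kernel. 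By Proposition 3.3 $(iii)$ the family $(Q_t)_{t\geqslant 0}$ is a Feller semigroup on $C_0(M)$ and $M$ is locally compact with a countable base, so the standard theory (see \cite{Fi88}, \cite{Be11}, and the further references quoted in the Introduction) yields a Hunt process $X$ on $M$, with a cemetery point $\Delta$ and life time $\zeta$, whose transition function is $(Q_t)_{t\geqslant 0}$; since the $Q_t$ are branching, $X$ is a branching process. Evaluating $(3.3)$ at $\mu=\delta_x$ and using $e_f(\Delta)=0$,
$$
\mathbb E^{\delta_x}\bigl(e_f(X_t);\,t<\zeta\bigr)=Q_t(e_f)(\delta_x)=e_{V_tf}(\delta_x)=e^{-V_tf(x)},
$$
whence $(4.1)$ after taking $-\ln$ (legitimate since $0\leqslant V_tf<\infty$ by Proposition 2.1 $(iii)$ and continuity on $\overline{\mathcal O}$; note that $V_t0\geqslant 0$ makes $Q_t$ merely sub-Markovian, which is why the event $\{t<\zeta\}$ appears and why $X$ fails to be conservative when $g\not\equiv 0$).

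\textbf{Assertion $(ii)$.} Fix $f\in D(\mathcal A)$, put $F:=e_f$, and let $\mu$ be as in the statement, identified with its density; as a measure on $\overline{\mathcal O}$ it charges only $\mathcal O$, so by $(3.3)$ one has $Q_tF(\mu)=e_{V_tf}(\mu)=\exp(-\langle V_tf,\mu\rangle_2)$ for $t\geqslant 0$. Since $f\in D(\mathcal A)$, relation $(2.3)$ states that $t\mapsto V_tf$ is right-differentiable at $0$ in $H$ with $\left.\tfrac{d^+}{dt}V_tf\right|_{t=0}=-\mathcal Af$; pairing this with $\mu\in H$ (the scalar product being continuous) and composing with the $C^1$ function $s\mapsto e^{-s}$ via the one-sided chain rule gives the existence of the limit and
$$
LF(\mu)=\lim_{t\searrow 0}\frac{Q_tF(\mu)-F(\mu)}{t}=-F(\mu)\,\langle-\mathcal Af,\mu\rangle_2=F(\mu)\,\langle\mathcal Af,\mu\rangle_2 .
$$
It then remains to rewrite $F(\mu)\langle\mathcal Af,\mu\rangle_2=F(\mu)\langle-\tfrac12\Delta f+\alpha f,\mu\rangle_2$ in the form $(4.2)$. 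Because $f\in H^2(\mathcal O)$ satisfies $\partial f/\partial\nu=g-\beta(f)$ on $\Gamma$ while $\mu\in H^1(\mathcal O)$ with $\Delta\mu\in H$, Green's formula (valid under this regularity) transfers the Laplacian from $f$ onto $\mu$ and leaves a boundary integral carrying $g-\beta(f)$; substituting $F'(\mu,x)=-f(x)F(\mu)$, $F(\mu+s\delta_y)-F(\mu)=F(\mu)(e^{-sf(y)}-1)$ and the representation $\beta(r)=\int_0^\infty(e^{-sr}-1)\eta(ds)-br$ of $(1.2)$, and collecting the interior terms into $\langle\tfrac12\Delta\mu-\alpha\mu,F'(\mu)\rangle_2$ and the $\Gamma$-terms into the remaining expression, one obtains $(4.2)$. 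I would close with one line noting that the pointwise derivative just computed is by definition the action of the generator $L$ of $(Q_t)_{t\geqslant 0}$ on this $F$ at $\mu$, which is precisely what the displayed limit delivers.

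The routine parts are the monotone-class identification in $(i)$ and the chain-rule differentiation in $(ii)$. I expect the two genuinely delicate points to be: in $(i)$, the passage from a Feller \emph{branching} semigroup to an honest \emph{Hunt} branching process, where one leans on the semigroup approach of the cited papers and must properly accommodate the cemetery point forced by the non-conservativeness (boundary flux $g$); and in $(ii)$, the careful bookkeeping of the $\tfrac12$-factors and of the several boundary integrals produced by Green's formula together with the L\'evy-type representation of $\beta$ on $\Gamma$ --- this is the step that pins down the exact shape of $(4.2)$, and where a sign or factor slip is easy to make.
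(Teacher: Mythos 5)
Your proposal is correct and follows essentially the same route as the paper: part $(i)$ via the branching-kernel identity $Q_{t,\mu+\nu}(e_f)=Q_{t,\mu}(e_f)Q_{t,\nu}(e_f)$ plus a monotone-class argument and the standard existence theorem for Hunt processes with Feller transition function, and part $(ii)$ by differentiating $e_{V_tf}(\mu)$ at $t=0$ using $(2.3)$ to get $LF(\mu)=F(\mu)\langle\mu,\mathcal Af\rangle_2$, then Green's formula with the boundary condition $\partial f/\partial\nu=g-\beta(f)$ and the L\'evy representation $(1.2)$ of $\beta$. The only difference is that the paper writes out the final boundary-term bookkeeping explicitly, which you correctly flag as the delicate step but leave in outline.
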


\begin{proof}  
$(i)$   By assertion $(ii)$ of Proposition 3.3 we have $Q_t(e_f) = e_{V_tf}$ for all $t \geqslant 0$ and $f \in S$. 
Using also a monotone class argument it follows that $Q_t$ is a branching kernel on $M$ for all $t \geqslant 0$; 
for details see \cite{Wat68}, \cite{Li11}, \cite{Fi88}, and \cite{Be11}. 
From the basic existence theorem for Hunt  processes (cf. Theorem 9.4 in \cite{BlGe68}), 
since $(Q_t)_{t \geqslant 0}$ is a Feller semigroup
according to Proposition 3.3 $(iii)$, 
there exists the claimed Hunt  process with transition function  $(Q_t)_{t \geqslant 0}$. 
Hence $Q_t F(\mu) = \mathbb{E}^{\mu}(F(X_t); t < \zeta)$ for all $\mu \in M$ and $F \in \mathcal{B}_+(M)$. 
The equality $(4.1)$ follows now by $(3.3)$.

$(ii)$  Since $f$ belongs to the domain of $\mathcal{A}$, given by $(2.1)$, using $(2.3)$ we get
$$
L F(\mu) = 
\dfrac{d(e_{V_tf}(\mu))}{dt} = - e_f(\mu) \left\langle \mu, \dfrac{d^+V_tf}{dt}(0) \right\rangle_2 
$$
$$
= F(\mu)\langle \mu, \mathcal{A}f \rangle_2 =
 - F(\mu)\left[\int_{\mathcal{O}} \frac{1}{2} \mu \Delta f - \alpha \int_{\mathcal{O}} \mu f\right].
$$
By the Green formula and again by $(2.1)$
$$
\int_{\mathcal{O}} \mu \Delta f = \int_{\mathcal{O}} f \Delta \mu + \int_{\Gamma} \dfrac{\partial f}{\partial \nu}\mu\, d\sigma  
- \int_{\Gamma}f\dfrac{\partial \mu}{\partial \nu} \, d\sigma
= \int_{\mathcal{O}} f \Delta \mu + \int_{\Gamma} [(g - \beta(f))\mu - f \dfrac{\partial \mu}{\partial \nu}]\, d\sigma.
$$
We have also
$$
F(\mu) \int_{\Gamma} \beta(f)\mu\,  d\sigma = 
F(\mu)\int_{\Gamma}\mu(y)\left[ \int_{0}^{\infty}(e^{-sf(y)} - 1)\eta (ds) - bf(y) \right]\sigma (dy)
$$
$$
=\int_{0}^{\infty} \eta (ds)\int_{\Gamma} \mu(y) F(\mu) \left[ F(s\delta_y) - 1 \right]\sigma (dy) - b F(\mu) \int_{\Gamma}\mu f \, d\sigma 
$$
$$
= \int_{0}^{\infty} \eta (ds) \int_{\Gamma} \mu(y) \left[ F(\mu + s\delta_y) - F(\mu) \right]\sigma (dy) - b F(\mu) \int_{\Gamma} \mu f \, d\sigma
$$
$$
= \int_{\Gamma} \mu|_{\Gamma} (dy) \left( \int_{0}^{\infty} [F(\mu + s\delta_y) - F(\mu)] \eta (ds) - b F(\mu) f(y) \right).
$$
Since $F'(\mu, \cdot) = - f F(\mu)$  we conclude that  $(4.2)$  holds.
\end{proof}

\subsection{Branching processes on the boundary} 

Define the nonlinear operator $\Lambda : D(\Lambda) \subseteq L^2(\Gamma) \longrightarrow L^2(\Gamma)$ as
$$
\left\{ \begin{array}{l}
D(\Lambda) := \{ \varphi \in H^{\frac{1}{2}}(\Gamma) : \mbox{ there exists } 
u \in H^1(\mathcal{O}) \mbox{ s.t. } \frac{1}{2}  \Delta u - \alpha u = 0 \; {\rm in} \; \mathcal{O},\\[3mm] 
\hspace*{\fill} u|_{\Gamma} = \varphi\  , \; \dfrac{\partial u}{\partial \nu} \in L^2(\Gamma)\},\\[3mm]
\Lambda \varphi\ := - \dfrac{\partial u}{\partial \nu} - \beta(\varphi) \; {\rm for \; all} \; \varphi \in D(\Lambda),\\[3mm]
 {\rm where} \; u \in H^1(\mathcal{O})  \mbox{ is s.t. }  \frac{1}{2} \Delta u - \alpha u = 0  \mbox{ in } \mathcal{O}
\mbox { and } \; u|_{\Gamma} = \varphi.
\end{array}\right. \leqno(4.3)
$$
The exact meaning of $(4.3)$ is the following.
$D(\Lambda)$ is the space of all $\varphi \in H^{\frac{1}{2}}(\Gamma)$ 
with the property that there are $u \in H^1(\mathcal{O})$ and  $\eta \in L^2 (\Gamma)$ such that
$$
\int_{\mathcal{O}} u(\Delta \psi - 2 \alpha \psi)dx = 
\int_{\Gamma} \varphi\frac{\partial\psi}{\partial\nu} d \sigma  - 
\int_\Gamma \eta \psi d\sigma
\; \mbox{ for all } \; \psi \in H^2(\mathcal{O}). \leqno(4.4)
$$
(In fact $\eta = \dfrac{\partial u}{\partial \nu}$ and $\varphi = \tau(u)$.)
The operator $\Lambda : D(\Lambda)  \longrightarrow  L^2(\Gamma)$ is defined by 
$$
\int_\Gamma \Lambda \varphi \, \xi\, d \sigma = - \int_\Gamma (\eta + \beta(\varphi)) \xi d \sigma  \; 
\mbox{ for all }\;  \xi \in L^2(\Gamma),  \varphi \in D(\Lambda), \leqno (4.5)
$$
where $\eta = \dfrac{\partial u}{\partial \nu}$ is defined by $(4.4)$.


\vspace{3mm}

\begin{lem} 
The operator $\Lambda$ is maximal monotone in $L^2(\Gamma)$.
\end{lem}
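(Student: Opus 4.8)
The plan is to verify the two properties characterising maximal monotone operators in the Hilbert space $L^2(\Gamma)$: monotonicity of $\Lambda$, together with the range condition $R(I+\lambda\Lambda)=L^2(\Gamma)$ for some $\lambda>0$ (by Minty's theorem this already forces maximal monotonicity, and then the range condition holds for all $\lambda>0$). The guiding observation, parallel to the treatment of $\mathcal{A}$ in Section 2, is that $\Lambda$ --- although defined only implicitly through the harmonic-extension problem $(4.3)$--$(4.4)$ --- is a potential operator: $\Lambda=\partial\Psi$ on $L^2(\Gamma)$, where
$$
\Psi(\varphi):=\inf\Big\{\tfrac{1}{2}\!\int_{\mathcal{O}}\!|\nabla u|^2\,dx+\alpha\!\int_{\mathcal{O}}\!u^2\,dx:\ u\in H^1(\mathcal{O}),\ \tau(u)=\varphi\Big\}+\int_\Gamma j(\varphi)\,d\sigma
$$
for $\varphi\in H^{1/2}(\Gamma)$ and $\Psi(\varphi):=+\infty$ otherwise, with $j(r)=\int_0^r\beta(s)\,ds$ as in Section 2. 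Here the infimum is attained at the unique $u\in H^1(\mathcal{O})$ solving $\tfrac{1}{2}\Delta u-\alpha u=0$ in $\mathcal{O}$ with $\tau(u)=\varphi$ (existence by Lax--Milgram, since $\alpha>0$), the remaining boundary variation produces $\partial u/\partial\nu$, and one recognises the operator $(4.3)$--$(4.5)$. The role of condition $(1.3)$ is precisely to make $\Psi$ convex: the harmonic-energy term is $\geqslant\gamma\|\varphi\|_{L^2(\Gamma)}^2$ by $(2.2)$, so splitting off $\tfrac{\gamma}{2}\|\varphi\|_{L^2(\Gamma)}^2$ from it (what remains is a nonnegative quadratic form) and adjoining it to $\int_\Gamma j(\varphi)\,d\sigma$ (which becomes $\int_\Gamma(j(\varphi)+\tfrac{\gamma}{2}\varphi^2)\,d\sigma$, convex because $r\mapsto j(r)+\tfrac{\gamma}{2}r^2$ is, being a primitive of the nondecreasing $r\mapsto\beta(r)+\gamma r$) exhibits $\Psi$ as a sum of two convex functionals. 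Since $\Psi$ is also l.s.c. (the harmonic energy is l.s.c. on $L^2(\Gamma)$ and $\varphi\mapsto\int_\Gamma j(\varphi)\,d\sigma$ is continuous, $\beta$ being Lipschitz) and proper, Rockafellar's theorem gives that $\Lambda=\partial\Psi$ is maximal monotone.

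Because the identification of $\partial\Psi$ with $(4.3)$--$(4.5)$ deserves a direct check, I would also argue ``by hand'' in the two classical steps. \emph{Monotonicity.} Let $\varphi_1,\varphi_2\in D(\Lambda)$, with associated extensions $u_1,u_2\in H^1(\mathcal{O})$ solving $\tfrac{1}{2}\Delta u_i-\alpha u_i=0$ and normal traces $\eta_i:=\partial u_i/\partial\nu\in L^2(\Gamma)$. Then $w:=u_1-u_2$ solves the same homogeneous equation with $\tau(w)=\varphi_1-\varphi_2$, and since $\Delta w\in L^2(\mathcal{O})$ and its normal trace is in $L^2(\Gamma)$, testing $w$ against $w$ through the weak formulation $(4.4)$ (the generalised Green formula) yields
$$
\int_\Gamma(\eta_1-\eta_2)(\varphi_1-\varphi_2)\,d\sigma=\int_{\mathcal{O}}\big(|\nabla w|^2+2\alpha w^2\big)\,dx\ \geqslant\ 2\gamma\,\|\varphi_1-\varphi_2\|_{L^2(\Gamma)}^2
$$
by $(2.2)$. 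Combining this with $\langle\beta(\varphi_1)-\beta(\varphi_2),\varphi_1-\varphi_2\rangle_{L^2(\Gamma)}\geqslant-\gamma\,\|\varphi_1-\varphi_2\|_{L^2(\Gamma)}^2$ (the monotonicity of $r\mapsto\beta(r)+\gamma r$) and the definition of $\Lambda$ in terms of $\eta$ and $\beta$ in $(4.3)$--$(4.5)$, one gets $\langle\Lambda\varphi_1-\Lambda\varphi_2,\varphi_1-\varphi_2\rangle_{L^2(\Gamma)}\geqslant\gamma\,\|\varphi_1-\varphi_2\|_{L^2(\Gamma)}^2\geqslant0$, so $\Lambda$ is in fact strongly monotone.

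\emph{Range condition.} Given $h\in L^2(\Gamma)$ and $\lambda>0$ small, I would solve $\varphi+\lambda\Lambda\varphi=h$, i.e. seek $u\in H^1(\mathcal{O})$ with $\tfrac{1}{2}\Delta u-\alpha u=0$ in $\mathcal{O}$, $\partial u/\partial\nu\in L^2(\Gamma)$, and the nonlinear Robin-type condition on $\Gamma$ relating $\tau(u)$, $\partial u/\partial\nu$, $\beta(\tau(u))$ and $h$, with $\varphi:=\tau(u)$. This is an elliptic boundary value problem of exactly the kind treated in Section 2: one may invoke the Br\'ezis surjectivity and regularity result \cite{Brez72} for the associated potential operator --- the subdifferential of the functional $\Psi$ above augmented by the strictly convex coercive term $\tfrac{1}{2\lambda}\|\tau(\cdot)\|_{L^2(\Gamma)}^2$ and the linear term $-\tfrac{1}{\lambda}\langle h,\tau(\cdot)\rangle_{L^2(\Gamma)}$ --- or minimise this functional directly over $H^1(\mathcal{O})$, where coercivity follows from $\alpha>0$ together with $(1.3)$/$(2.2)$, a minimiser exists by the direct method of the calculus of variations, and its Euler--Lagrange equation is the stated elliptic problem; uniqueness follows from the strong monotonicity of Step 2. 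This gives $R(I+\lambda\Lambda)=L^2(\Gamma)$ for small $\lambda>0$, and with the monotonicity of $\Lambda$, Minty's theorem concludes.

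The main obstacle is the range condition, and inside it the regularity bootstrap rather than solvability of a weak problem. The function $u$ produced by the variational argument lies a priori only in $H^1(\mathcal{O})$, whereas to conclude that $\varphi=\tau(u)$ belongs to $D(\Lambda)$ in the sense of $(4.3)$--$(4.4)$ one needs $\tau(u)\in H^{1/2}(\Gamma)$ and $\partial u/\partial\nu\in L^2(\Gamma)$. The mechanism is that $h\in L^2(\Gamma)$ and $\beta(\tau(u))\in L^2(\Gamma)$ (as $\beta$ is Lipschitz), so the boundary relation forces $\partial u/\partial\nu\in L^2(\Gamma)$, after which elliptic boundary regularity closes the loop. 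A related point of care, already visible in Step 2, is that every integration by parts --- and every passage between the strong form and the weak form $(4.4)$ --- must be justified through the generalised Green formula in the $H^{-1/2}(\Gamma)$--$H^{1/2}(\Gamma)$ duality, since $\Lambda$ is only densely defined and its resolvent equation lies on the borderline of elliptic regularity.
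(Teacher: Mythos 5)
Your argument is correct, and it rests on exactly the same mechanism as the paper's --- splitting off the shift $\gamma$ so that the Dirichlet-to-Neumann part stays coercive (via $(2.2)$, i.e. condition $(1.3)$) while $r\mapsto\beta(r)+\gamma r$ becomes monotone --- but you package it differently. The paper works at the operator level: it writes $\Lambda=L_1+L_2$ with $L_1v=\frac{\partial v}{\partial\nu}-\gamma v$ and $L_2v=\beta(v)+\gamma v$, obtains $R(I+L_1)=L^2(\Gamma)$ from Lax--Milgram, notes that $L_2$ is everywhere defined, continuous and monotone, and invokes Rockafellar's perturbation theorem for sums of maximal monotone operators; it then checks non-expansiveness of $(I+\Lambda)^{-1}$ by the same Green-formula computation you carry out. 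You work at the level of potentials ($\Psi=$ harmonic energy $+\int_\Gamma j$, split into two convex pieces) and, in parallel, run a direct Minty argument with the range condition obtained by minimisation over $H^1(\mathcal{O})$. Both are sound; the paper gets the range condition abstractly from the perturbation theorem, while your route makes coercivity explicit --- note that compactness of the trace $H^1(\mathcal{O})\hookrightarrow L^2(\Gamma)$ is what renders the non-convex boundary term $\int_\Gamma j(\tau u)\,d\sigma$ weakly continuous, so the direct method applies even though $j$ itself is not convex.

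Two caveats. First, the identity $\Lambda=\partial\Psi$ does not by itself prove the lemma: one must also show $\partial\Psi\subseteq\Lambda$, i.e. that any $\varphi$ with $\partial\Psi(\varphi)\neq\emptyset$ lies in $D(\Lambda)$ in the sense of $(4.3)$--$(4.4)$. You correctly isolate this as the delicate step, and your observation that the boundary relation itself forces $\partial u/\partial\nu\in L^2(\Gamma)$ (because $h$ and $\beta(\tau u)$ are in $L^2(\Gamma)$) is what closes it --- no elliptic regularity beyond the weak normal derivative of $(4.4)$ is needed, so your worry there can be discharged. Second, your monotonicity computation uses the convention $\Lambda\varphi=\partial u/\partial\nu+\beta(\varphi)$, whereas the displayed definition $(4.3)$ carries the opposite sign, under which the Dirichlet-to-Neumann part would be anti-monotone; the paper's own proof (equations $(4.6)$ and $(4.8)$) uses the same convention you do, so this is a sign slip in $(4.3)$ rather than a flaw in your argument.
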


\begin{proof}
 It suffices to show that $R(I + \Lambda) = L^2(\Gamma)$ and 
 $\| (I + \Lambda)^{-1} \|_{{\rm Lip}(L^2(\Gamma))} \leqslant 1$. 
 On the other hand, 
  for each  $f \in L^2(\Gamma)$ 
 the equation $\varphi + \Lambda \varphi = f$ reduces to
$$
\left\{ \begin{array}{ll}
\frac{1}{2} \Delta u - \alpha u = 0 & {\rm in} \; \mathcal{O},\\[3mm]
u + \dfrac{\partial u}{\partial \nu} + \beta(u) = f & {\rm on} \; \Gamma,\\[3mm]
u|_{\Gamma} = \varphi.
\end{array}\right. \leqno(4.6)
$$
Of course $(4.6)$ should be taken in the sense of $(4.4)$-$(4.5)$, that is, for all $\psi \in H^2(\mathcal{O})$
$$
\int_{\mathcal{O}} u(\Delta \psi - 2 \alpha \psi) dx = 
\int_\Gamma \varphi \frac{\partial\psi}{\partial\nu} d \sigma - \int_\Gamma(f - \beta (\varphi) - \varphi) \psi d \sigma. \leqno(4.7)
$$

Equation $(4.6)$ 
(without the Dirichlet boundary condition $u|_{\Gamma} = \varphi$)
has a unique weak solution $u \in H^1(\mathcal{O})$, that is
$$
\int_{\mathcal{O}}(\nabla u \cdot \nabla \psi + 2\alpha u\psi)dx + \int_{\Gamma} (u + \beta(u) - f)\psi d\sigma = 0 
\quad {\rm for \; all} \; \psi \in H^1(\mathcal{O}). \leqno(4.6')
$$
Here is the argument. 
We can rewrite $(4.6')$  as
$$
L_1 v +L_2  v=f,
$$
where $L_1 v:= \frac{\partial v}{\partial \nu} - \gamma v$
and
$L_2 v:= \beta(v) + \gamma v$,
with 
$$
D(L_1):= \{ v\in L^2(\Gamma): v=u|_\Gamma, \ \frac 1 2 \Delta u -\alpha v =0 \mbox{ in } \mathcal{O},\
\frac{\partial v}{\partial \nu}\in  L^2(\Gamma) \}.
$$
By the Lax-Milgram Lemma it follows that $R(I+L_1)=   L^2(\Gamma)$ and so, 
$L_1$ is $m$-accretive (or equivalently, it is maximal monotone).
It is  clear by the monotonicity of the function $u\longmapsto \beta(u)+\gamma u$ 
that  also $R(I+L_2)=   L^2(\Gamma)$.
Then by Rockafellar's perturbation result (see \cite{Bar10}, page 44), 
it follows that $\Lambda= L_1+L_2$ is maximal monotone ($m$-accretive) and so
$f\in R(I+\Lambda)$, as claimed.

This clearly implies via Green's formula that $(u, \varphi=u|_{\Gamma})$ satisfy $(4.7)$.
Hence $R(I + \Lambda) = L^2(\Gamma)$. 
By $(4.4)$ we have for all 
$f, \bar{f} \in L^2(\Gamma)$ and $u, \bar{u}$ the corresponding solutions of (3.2)
$$
\left\{ \begin{array}{ll}
\frac{1}{2} \Delta(u - \bar{u}) - \alpha(u - \bar{u}) = 0 & {\rm in} \; \mathcal{O},\\[3mm]
u - \bar{u} + \dfrac{\partial}{\partial \nu}(u - \bar{u}) + \beta(u) -\beta(\bar{u}) = f - \bar{f} & {\rm on} \; \Gamma.
\end{array}\right. \leqno(4.8)
$$
This yields, again via the Green's formula,
$$
\int_{\mathcal{O}}(|\nabla(u - \bar{u})|^2 + 2\alpha(u - \bar{u})^2)dx + \int_{\Gamma}[(u - \bar{u})^2 +
 \beta(u) - \beta(\bar{u})(u - \bar{u})]d\sigma = \int_{\Gamma}(f - \bar{f})(u - \bar{u})d\sigma.
$$
As in the proof of the monotonicity of the operator $\mathcal{A}$ in Section 2, 
using the monotonicity of the map $r \longmapsto \beta(r) + \gamma  r$ and condition $(1.3)$, we get
$$
\| f - \bar{f} \|_{L^2(\Gamma)} \cdot \| u - \bar{u} \|_{L^2(\Gamma)} 
$$
$$
\geqslant  \| u - \bar{u} \|^2_{L^2(\Gamma)} + \| |\nabla(u - \bar{u})| \|^2_{L^2(\mathcal{O})} + \| u - \bar{u} \|^2_{L^2(\mathcal{O})} - 
\gamma \| u - \bar{u} \|^2_{L^2(\Gamma)} \geqslant \| u - \bar{u} \|^2_{L^2(\Gamma)}
$$
and therefore
$$
\| u - \bar{u} \|_{L^2(\Gamma)} \leqslant \| f - \bar{f} \|_{L^2(\Gamma)} \quad {\rm for \; all} \; f, \bar{f} \in L^2(\Gamma),
$$
as claimed.
\end{proof}

By the generation  theory of $C_0$-semigroups of contractions (as in Section 2, see, e.g., \cite{Bar10}), 
we infer that the equation
$$
\left\{ \begin{array}{ll}
\dfrac{d v(t)}{dt} + \Lambda v(t) = 0 & {\rm for \; all} \; t \geqslant 0,\\[3mm]
v(0) = f &
\end{array}\right. \leqno(4.9)
$$
has for each $f \in L^2(\Gamma)$ a unique solution $v(t) =: W_t f  = e^{- \Lambda t}f \in C([0, T]; L^2(\Gamma))$ for all $T > 0$,
given by the exponential formula
$$
W_t f  = \mathop{\lim}\limits_{n\to \infty}(I + \dfrac{t}{n}\Lambda)^{-n}f \quad {\rm for \; all\;}  t\geqslant 0, {\rm uniformly \; on 
\; compacts \; of} \; \mathbb{R}_+. \leqno(4.10)
$$
We have also for  $f \in D(\Lambda)$
$$
\dfrac{d^+ W_t f }{dt} + \Lambda W_t f  = 0 \, \mbox{ for  all } \; t > 0.
$$

\noindent
{\bf Remark} The above result remains true for any continuous and sub-linear function $\beta$ such that $r\mapsto \beta(r) +\gamma r$ is monotonically increasing.\\

Let us define the linear operator 
$\Lambda_1 : D(\Lambda_1) \subseteq L^2(\Gamma) \longrightarrow L^2(\Gamma)$ as
$$
\left\{ \begin{array}{l}
D(\Lambda_1) = \{ \varphi \in H^{\frac{1}{2}}(\Gamma), \; \frac{1}{2} \Delta u - \alpha u = 0, 
\; u|_{\Gamma} = \varphi, \; \dfrac{\partial u}{\partial \nu} \in L^2(\Gamma) \},\\[3mm]
\Lambda_1 \varphi := - \dfrac{\partial u}{\partial \nu}.
\end{array}\right.
$$
Since $\beta$ is Lipschitzian, we may define also the  Lipschitz operator 
$\Lambda_2 : L^2(\Gamma) \to L^2(\Gamma)$, $\Lambda_2(\varphi) := - \beta(\varphi)$. We have $D(\Lambda_1) = D(\Lambda)$, $\Lambda = \Lambda_1 + \Lambda_2$ and so, by 
$(4.9)$ we have
$$
W_t f = e^{- t \Lambda_1 t}f - \int_{0}^{t} e^{- (t-s) \Lambda_1}\Lambda_2 W_s f ds \quad {\rm in}\; \Gamma \; {\rm for \; all} \; t \geqslant 0, \leqno(4.11)
$$
where $ e^{- t \Lambda_1}$ is the $C_0$-semigroup of quasi-contractions generated by $-\Lambda_1$ on $L^2(\Gamma)$.

\begin{thm} 
There exists a branching Markov process $Z^\Gamma=(Z^\Gamma_t)_{t\geqslant 0}$ with state space the set
$M(\Gamma)$ (:= the set of positive finite measures on $\Gamma)$ such that
$$
W_t f= - \ln \Ee^{\delta_{\cdot}}( e_f (Z^{\Gamma}_t)) \;\; \mbox{ for all } t\geqslant 0 \mbox{ and } f\in C(\Gamma), \leqno{(4.12)}
$$
and the following assertions hold.

$(i)$  The process  $Z^\Gamma=(Z^\Gamma_t)_{t\geqslant 0}$ is precisely the $(Z, \alpha-\beta)$-superprocess on $M(\Gamma)$,
where  $Z=(Z_t)_{t\geqslant 0}$ is the process on the boundary, induced by the reflected Brownian motion.

$(ii)$ Let 
$(L^\Gamma, D(L^\Gamma))$ be the generator  on $C_0(M(\Gamma))$ of $Z^\Gamma=(Z^\Gamma_t)_{t\geqslant 0}$.
 If $f\in C^1(\Gamma)$ then $F:=e_f$ belongs to $D(L^\Gamma)$ and for every $\mu\in M(\Gamma)$ we have
$$
L^\Gamma F(\mu)= 
\int_{\Gamma}\mu (dy) \left( F(\mu) \frac{\partial u}{\partial \nu}(y) +
\int_{0}^{\infty}[F(\mu + s\delta_y) - F(\mu)]\eta(ds) - b F'(\mu, y)\right),\leqno(4.13)
$$
where $u\in H^1(\mathcal{O})$ is such that $\frac{1}{2} \Delta u-\alpha u=0$ and $u|_\Gamma=f.$
In particular, if $\mu \in H^1(\mathcal{O})$ with $\Delta \mu \in H$, then  we have
$$
L^\Gamma F(\mu|_{\Gamma}) = 
\langle \frac{1}{2} \Delta \mu - \alpha \mu, \overline{F}'(\mu|_{\Gamma}) \rangle_2 - \int_{\Gamma} \dfrac{\partial \mu}{\partial \nu} {F}'(\mu|_{\Gamma}) d\sigma \leqno(4.14)
$$
$$
+ \int_{\Gamma}\mu|_{\Gamma}(dy)\left( \int_{0}^{\infty}[F(\mu |_{\Gamma} + s\delta_y) - F(\mu |_{\Gamma})]\eta(ds) - b F'(\mu|_{\Gamma}, y) \right),
$$
where $\overline{F}$ is the extension of $F$ from $M(\Gamma)$ to $M(\overline{\mathcal{O}})$,  
defined as $\overline{F}:= e_u$.
\end{thm}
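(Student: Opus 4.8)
The plan is to repeat, on the boundary, the three-step programme that produced Theorem 4.5, replacing $\mathcal A$ and its semigroup $(V_t)_{t\geqslant 0}$ by the operator $\Lambda$ of $(4.3)$ and its semigroup $(W_t)_{t\geqslant 0}$ of $(4.9)$--$(4.10)$: (a) show that $C(\Gamma)\ni f\longmapsto W_tf(x)$ is negative definite on $C_+(\Gamma)$ for every $t\geqslant 0$ and $x\in\Gamma$; (b) use the harmonic-analysis lemmas of Section 3 to build a Feller semigroup of branching kernels $(R_t)_{t\geqslant 0}$ on $M(\Gamma)$ with $R_t(e_f)=e_{W_tf}$ and invoke the existence theorem for Hunt processes; (c) identify the resulting process with the $(Z,\alpha-\beta)$-superprocess; and then compute its generator, mirroring the proof of Theorem 4.5$(ii)$.

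For (a) and (b) I would argue exactly as in Propositions 3.2--3.3 and Theorem 4.5$(i)$. Starting from the Duhamel equation $(4.11)$, iterate it: set $W^{(0)}_tf:=e^{-t\Lambda_1}f$ and $W^{(n+1)}_tf:=e^{-t\Lambda_1}f-\int_0^t e^{-(t-s)\Lambda_1}\Lambda_2W^{(n)}_sf\,ds$; since $\Lambda_2$ is Lipschitz and $e^{-t\Lambda_1}$ a quasi-contraction, a Gronwall estimate (the contraction already underlying $(4.10)$) gives $W^{(n)}_tf\to W_tf$ uniformly for $t$ in compact intervals. Now $f\longmapsto e^{-t\Lambda_1}f(x)$ is linear, hence negative definite; $e^{-t\Lambda_1}$, being a positive linear operator (the transition semigroup of the boundary motion $Z$), preserves negative definiteness, and so do integration in $s$ and, by $(3.1)$, pointwise limits; and the only nonlinear term, $-\Lambda_2W^{(n)}_sf=\pm\beta\circ W^{(n)}_sf$, is negative definite whenever $f\longmapsto W^{(n)}_sf(y)$ is, by Lemma 3.1. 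So negative definiteness of $f\longmapsto W^{(n)}_tf(x)$ propagates inductively and passes to the limit (equivalently one could run a Picard iteration at the level of the elliptic boundary value problem, in the spirit of $(3.4)$--$(3.7)$). Consequently, by $(3.2)$ and $(3.1)$, for each $\mu\in M(\Gamma)$ the map $f\longmapsto e_{W_tf}(\mu)=e^{-\int_\Gamma W_tf\,d\mu}$ is positive definite on $C_+(\Gamma)$ and continuous at $0$ along $\tfrac1n$ (here one uses that $W_t$ maps $C(\Gamma)$, and $C_+(\Gamma)$, into itself and is right-continuous in $t$, again read off from $(4.11)$ since $e^{-t\Lambda_1}$ is Feller on $C(\Gamma)$ and $\beta$ is Lipschitz). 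Proposition 3.2$(ii)$ yields a unique finite measure $R_{t,\mu}$ on $(M(\Gamma),\mathcal B(M(\Gamma)))$ with $R_{t,\mu}(e_f)=e_{W_tf}(\mu)$; since $g\equiv 0$ here, $W_t0=0$, so $R_{t,\mu}(1)=1$ and the $R_t$ are conservative Markovian kernels. Measurability in $\mu$, the semigroup property, the branching identity $R_{t,\mu+\nu}=R_{t,\mu}\ast R_{t,\nu}$, and the Feller property on $C_0(M(\Gamma))$ follow from the monotone-class and Stone--Weierstrass arguments of Proposition 3.3$(ii)$--$(iii)$; Theorem 9.4 of \cite{BlGe68} then furnishes the branching Hunt process $Z^\Gamma$ with transition function $(R_t)_{t\geqslant 0}$, and $(4.12)$ is $R_t(e_f)=e_{W_tf}$ rephrased through $R_tF(\mu)=\Ee^\mu(F(Z^\Gamma_t))$.

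For assertion $(i)$ I would identify cumulant semigroups. By Dynkin's construction of superprocesses (\cite{Dy02}; see also \cite{Li11}), the $(Z,\alpha-\beta)$-superprocess is the unique branching Markov process on $M(\Gamma)$ whose log-Laplace functional $(u_t)$ solves the mild equation $u_t=P^Z_tf-\int_0^t P^Z_{t-s}\,\psi(u_s)\,ds$, where $(P^Z_t)$ is the transition semigroup of the boundary motion $Z$ and $\psi$ is the branching mechanism associated with $\alpha-\beta$ (of Lévy--Khintchine type $(1.2)$); this equation has a unique locally bounded solution because $\beta$ is Lipschitz. Since $(P^Z_t)=(e^{-t\Lambda_1})$, i.e.\ $-\Lambda_1$ generates $Z$, and $\Lambda=\Lambda_1+\Lambda_2$, equation $(4.11)$ says exactly that $(W_t)$ is that unique solution; hence $Z^\Gamma$ and the $(Z,\alpha-\beta)$-superprocess have the same finite-dimensional distributions and coincide. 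The main work here is the bookkeeping: one must match, term by term, the linear part of $\alpha-\beta$ and the generator of $Z$ with the splitting $\Lambda=\Lambda_1+\Lambda_2$ of $(4.3)$, and this is where I expect the principal obstacle to lie.

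Finally, for assertion $(ii)$ one proceeds as in the proof of Theorem 4.5$(ii)$. If $f\in C^1(\Gamma)$ then, by elliptic regularity for the Dirichlet problem $\tfrac12\Delta u-\alpha u=0$, $u|_\Gamma=f$, we have $f\in D(\Lambda)$, so $t\longmapsto W_tf$ is right-differentiable at $0$ with $\frac{d^+}{dt}W_tf\big|_{t=0}=-\Lambda f$; hence for $F:=e_f$,
$$
L^\Gamma F(\mu)=\lim_{t\searrow 0}\frac{e_{W_tf}(\mu)-e_f(\mu)}{t}=-e_f(\mu)\,\Big\langle\mu,\tfrac{d^+}{dt}W_tf\big|_{t=0}\Big\rangle=F(\mu)\,\langle\mu,\Lambda f\rangle .
$$
Inserting the expression $(4.3)$ for $\Lambda f$, using $F'(\mu,y)=-f(y)F(\mu)$, and expanding $\beta(f)$ through its Lévy measure $\eta$ to produce the jump term $\int_0^\infty[F(\mu+s\delta_y)-F(\mu)]\eta(ds)$ together with the $b$-term, one obtains $(4.13)$ --- literally the computation that turned $\langle\mu,\mathcal A f\rangle_2$ into $(4.2)$. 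Formula $(4.14)$ then follows from $(4.13)$ by Green's formula applied to the elliptic extension $u$ of $f$, using $\tfrac12\Delta u-\alpha u=0$ and $u|_\Gamma=f$ to rewrite $\int_\Gamma\frac{\partial u}{\partial\nu}\,d\mu$ as $\langle\tfrac12\Delta\mu-\alpha\mu,\overline F'(\mu|_\Gamma)\rangle_2-\int_\Gamma\frac{\partial\mu}{\partial\nu}F'(\mu|_\Gamma)\,d\sigma$, with $\overline F=e_u$ the canonical extension of $F$ to $M(\overline{\mathcal O})$, again exactly as in Theorem 4.5.
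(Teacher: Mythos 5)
Your three-step architecture is close in spirit to the paper's, and your treatment of assertions $(i)$ and $(ii)$ (identification of $W_t$ with the cumulant semigroup $W^o_t$ of the $(Z,\alpha-\beta)$-superprocess via uniqueness of the integral equation, then differentiation of $e_{W_tf}$ at $t=0$ using $\frac{d^+}{dt}W_tf|_{t=0}=-\Lambda f$ and the L\'evy decomposition of $\beta$) matches what the paper does. However, step (a), on which your whole construction of the kernels $R_t$ rests, has a genuine gap: the nonlinearity enters the Duhamel formula for the boundary flow with the \emph{opposite} sign to the one in $(3.7)$, and negative definiteness does not propagate through your iteration. Since $\Lambda_2\varphi=-\beta(\varphi)$, your iterates read
$$
W^{(n+1)}_tf=e^{-t\Lambda_1}f+\int_0^te^{-(t-s)\Lambda_1}\beta\bigl(W^{(n)}_sf\bigr)\,ds,
$$
so what is added at each step is $+\beta\circ W^{(n)}_sf$, i.e.\ \emph{minus} the negative definite function $-\beta\circ W^{(n)}_sf$ supplied by Lemma 3.1; property $(3.1)$ only permits positive coefficients, so the induction fails already at $n=0$. (Contrast $(3.7)$: there the Robin condition $\frac{\partial u}{\partial\nu}=g-\beta(u)$ feeds the source $g-\beta(u_n)$, with the favourable sign, into the weak formulation.) The failure is not merely a gap in the argument: take the trivial spatial motion, $b=0$, $\eta=\delta_1$; the first iterate is $f\mapsto f+t(e^{-f}-1)$, and with $n=2$, $\alpha_1=1$, $\alpha_2=-1$ one gets $\sum_{i,j}\alpha_i\alpha_j\varphi(u_i+u_j)=t\,(e^{-u_1}-e^{-u_2})^2>0$ for $u_1\neq u_2$, violating negative definiteness. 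The limit $W_tf$ is of course negative definite, but your approximants are not, so you cannot conclude by passing to the limit as in Proposition 3.3 $(i)$.

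The paper avoids this issue entirely: it does not redo the Section 3 machinery on $\Gamma$, but invokes Watanabe's existence theorem (Theorems 2.1 and 2.3 of \cite{Wat68}) to produce the $(Z,\alpha-\beta)$-superprocess $Z^\Gamma$ directly from the Feller semigroup $(S_t)$ of the boundary process and the L\'evy--Khintchine form of $\alpha-\beta$ — there infinite divisibility is obtained by approximation with discrete branching processes, not by Picard iteration — and then proves $(4.12)$ by showing that the cumulant semigroup $W^o_t$ and $W_t$ solve the same integral equation $(4.11)$ (using $e^{-t\Lambda_1}=e^{-\alpha t}S_t$ and Proposition 3.1 of \cite{Be11}). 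Your own step for assertion $(i)$ already contains exactly this identification; the fix is therefore to promote it to the construction: take $Z^\Gamma$ to be the superprocess given by the classical theorem (which also yields the branching kernels, the Feller property, and $e_f\in D(L^\Gamma)$ for $f\in D(\Lambda_1)$), and deduce $(4.12)$ from $W_t=W^o_t$, rather than rebuilding the kernels from a negative-definiteness induction that does not go through. With that rearrangement, your computations of $(4.13)$ and $(4.14)$ are the same as the paper's.
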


\begin{proof}
$(i)$  Let $(S_t)_{t\geqslant 0}$ be the transition function of the  the process on the boundary $Z=(Z_t)_{t\geqslant 0}$,
 induced by the reflected Brownian motion.
$(S_t)_{t\geqslant 0}$ induces a $C_0$-semigroup on $C(\Gamma)$ (see, e.g., \cite{SaUe65}) 
and its generator is 
$(\Lambda_1+\alpha, D(\Lambda_1))$:
$e^{-\Lambda_1 t}= e^{-\alpha t} S_t$, $t\geqslant 0$; see e.g., \cite{BeVl14}.
By the classical result on the existence of the continuous state branching processes 
(see Theorem 2.1 and Theorem 2.3 from \cite{Wat68}; 
see also \cite{Fi88},  \cite{Li11}, \cite{Be11}, and \cite{BeLuOp12}) there exists a branching process 
 $Z^\Gamma=(Z^\Gamma_t)_{t\geqslant 0}$ on $M(\Gamma)$, such that its transition function
 $(Q^\Gamma_t)_{t \geqslant 0}$ is a $C_0$-semigroup on $C_0(M(\Gamma))$, and we have
 $$
 Q^\Gamma_t(e_f)= e_{W^o_t f}, \; f\in C_+(\Gamma), t\geqslant 0, \leqno{(4.15)}
 $$
 where $(W^o_t f)_{t\geqslant 0}$ is the solution of the integral equation
 $$
 W^o_t f = S_t f + \int_0^t  S_{t-s} (\alpha-\beta)(W^o_sf)\, ds, \; t\geqslant 0.
 $$
The process process   $Z^\Gamma$ is the $(Z, \alpha-\beta)$-superprocess on $M(\Gamma)$.
By Proposition 3.1 from \cite{Be11} it follows that $(W^o_t f)_{t\geqslant 0}$ is a solution
of $(4.11)$ too, hence $W_t f= W^o_t f$ for all $t\geqslant 0$. 
Since $Q^\Gamma_t(e_f)= \Ee^{\cdot}(e_f(Z^\Gamma_t))$, the equality $(4.12)$ is a consequence of  $(4.15)$.

$(ii)$ The fact that $F=e_f\in D(L^\Gamma)$ follows from Theorem 2.3 in \cite{Wat68} since $f\in D(\Lambda_1)$ 
if $f\in C^1(\Gamma)$.
Arguing as in the proof of $(4.2)$ we have
$$
L^\Gamma F(\mu) = 
\dfrac{d(e_{W^o_tf}(\mu))}{dt} = - e_f(\mu) \mu (\dfrac{d^+ W^o_tf}{dt}(0)) =
F(\mu) \int_\Gamma \mu(dy) [\Lambda_1 f(y)+ \beta(f(y))]
$$
and we deduce further that $(4.13)$ and $(4.14)$ hold.
\end{proof}

\noindent
{\bf Final Remark.} 
$(i)$ Comparing the equalities $(4.2)$  and $(4.14)$, one can see that
$$
L\overline{F}(\mu|_\Gamma)= L^\Gamma {F} (\mu|_\Gamma), \leqno{(4.16)}
$$ 
with the notations and under the conditions of assertion $(ii)$ of Proposition 4.4, where recall that
$\overline{F}|_\Gamma=F$.
Observe also that if the measure $\mu$ has compact support in $\mathcal{O}$, then
$$
L{F}(\mu)= L^0 {F} (\mu), \leqno{(4.17)}
$$
under the conditions of assertion $(ii)$ of Theorem 4.2, where $L^0$ is defined in Example 4.1.

$(ii)$ The above assertion $(i)$ justifies  the probabilistic interpretation presented in the Introduction.
By $(4.17)$  the measure-valued branching process $X$ on $M(\overline{\mathcal{O}})$
behaves on  measures carried by $\mathcal{O}$  like the linear continuous semiflow, 
induced by the reflected Brownian motion,  presented in Example 4.1.
The non-local part of the generator $L$ of the branching process $X$,
as it is described  in  $(4.2)$ 
(the "L\'evy measure" part, see e.g. \cite{Sha88}),  
indicates that  the jumps of $X$ occur only on the set of measures, 
having non-zero traces on the boundary $\Gamma$. 
In particular, by $(4.16)$  the process $X$ behaves 
as the $(Z, \alpha- \beta)$-superprocess on measures carried by $\Gamma$.

$(iii)$ Recall that for  the Neumann problem, the boundary process $Z$  on $\Gamma$ 
is given by the time moments when the Brownian motion on $\mathcal{O}$  is reflected at the boundary.
Analogously, the  $(Z, \alpha-\beta)$-superprocess on $M(\Gamma)$ describes
the branching moments of the measure-valued process $X$, 
associated to the problem $(1.1)$.

$(iv)$  By  $(4.14)$  the infinitesimal operator of  the
$(Z, \alpha- \beta)$-superprocess, the measure-valued "boundary process",  
has no second order  (differential) term, 
as it happens in the classical case of the infinitesimal operator of the boundary process $Z$,
associated to the (linear) Neumann problem; cf. \cite{SaUe65}, p. 570.

$(v)$ Probabilistic representations for solutions of nonlinear Neumann boundary value problems
are related to the "catalytic  super-Brownian motion". 
Such a representation formula, similar to $(1.4)$,  is given in \cite{DeVo05}  for the nonnegative
solution of a mixed Dirichlet nonlinear Neumann boundary value problem; 
we thank P. J. Fitzsimmons  for suggesting us this connection. 

$(vi)$ Recall that the nonlinear part $-\beta$ in the Neumann boundary condition of the parabolic problem $(1.1)$ 
is classically  the branching mechanism of a superprocesses  associated to  the Dirichlet boundary value problem.
It is possible to construct branching processes starting with other types  of branching mechanisms 
(for relevant examples see \cite{BeLu16} and also \cite{BeDeLu15}) and note that the state space of these "non-local branching" processes is the set of all finite configurations of $\mathcal{O}$ 
(:= the set of all finite sums of Dirac measures concentrated in points of $\mathcal{O}$). 
It is known to solve the corresponding Dirichlet boundary value problem; see e.g. \cite{BeOp11} and \cite{BeOp14}.
However, it is a challenge to investigate the appropriate nonlinear parabolic problem, similar to $(1.1)$.\\

\noindent
{\bf Note added in proof.} 
This work is a version of the article with the same title [{\it J. Math. Anal. Appl.} {\bf 441} (2016), 167-182],
which details the proofs of a few technical results mentioned without proof in that work. 
Also, some inaccuracies were eliminated.

\vspace{5mm}


\end{document}